\documentclass{amsart}
\usepackage{amsfonts}
\usepackage[active]{srcltx}
\usepackage{amsmath}
\usepackage{amssymb}
\usepackage{amsthm}
\usepackage{mathtools}
\usepackage{shuffle}
\usepackage{bm} 
\usepackage{wrapfig}

\usepackage{enumitem}
\usepackage{wasysym}
\usepackage{mathscinet}
\usepackage{verbatim}
\usepackage{graphicx}
\usepackage[
bookmarks=true,         
bookmarksnumbered=true, 
colorlinks=true, pdfstartview=FitV, linkcolor=blue, citecolor=blue,
urlcolor=blue]{hyperref}
\usepackage{microtype}
\usepackage{comment}
\usepackage{xcolor,cancel}
\usepackage[normalem]{ulem}
\usepackage{amsmath}
\newcommand{\stkout}[1]{\ifmmode\text{\sout{\ensuremath{#1}}}\else\sout{#1}\fi}
\usepackage{xcolor,cancel}

\setcounter{MaxMatrixCols}{10}
\theoremstyle{plain}
\newtheorem{theorem}{Theorem}[section]

\newtheorem{lemma}[theorem]{Lemma}

\newtheorem{proposition}[theorem]{Proposition}

\renewenvironment{proof}[1][Proof]{\textbf{#1.} }{\ \rule{0.5em}{0.5em} \par }
\theoremstyle{remark}

\newtheorem{assumption}[theorem]{Assumption}

\theoremstyle{definition}
\newtheorem{remark}[theorem]{Remark}

\def\wt{\widetilde}

\def\Om{{\Omega}}
\def\om{{\omega}}

\def\diag{{\rm diag}}

\def\om{\omega}
\def\Om{\Omega}

\def\wt{\widetilde}

\def\wt{\widetilde}

\DeclareMathOperator*{\Div}{\mathrm{div}}

\def\om{\omega}
\def\Om{\Omega}

\def\mcH{\mathcal{H}}

\setcounter{equation}{0}

\let\Section=\section
\def\section{\setcounter{equation}{0}\Section}

\title[Null controllability of degenerated     parabolic equations]{Null controllability of   n-dimensional  parabolic equations  degenerated on partial  boundary }
\date{April 2024}
\author[W. Wu, Y.Hu, H. Sun, D. Yang]{Weijia Wu$^{1}$, Yaozhong Hu$^{2}$, Hongli Sun$^{3,*}$,   Donghui Yang$^{1}$ }
\thanks{${}^*$Corresponding author: honglisun@126.com}
\address{$^1$ School of Mathematics and Statistics, Central South University, Changsha 410083, China} 
\address{$^2$  Department of Mathematical and Statistical Sciences, University of Alberta, Edmonton, AB T6G 2G1, Canada} 
\address{$^3$School of Mathematics, Physics and Big data, Chongqing University of Science and Technology,
	Chongging 401331, China}
\email{weijiawu@yeah.net}
\email{yaozhong@ualberta.ca}
\email{honglisun@126.com}
\email{donghyang@outlook.com}

\keywords{Null controllability, Carleman estimate, control theory}
\subjclass[2020]{93B05, 93B07}
\begin{document}
	\begin{abstract} 
		 This paper   extends 
		 the  Carleman estimates  to high  dimensional   parabolic equations with highly degenerate   symmetric coefficients on a bounded   domain of Lipschitz   boundary and use these estimates  to 
		   study the  controllability the corresponding equations.  Due to the nonsmoothness and   degeneracy of   boundary, the partial  integration by parts in Carleman
		   estimates have no meaning on the degenerate 
		   and nonsmooth parts of the boundary. To get around of this difficulty, we    construct  special weight function, and   transform some integral terms in degenerate regions into a non-degenerate ones carefully so that the obtained  Carleman estimates can still be used to the controllability problem. Our results includes some well-known works   as some special cases
		   as well as some interesting new examples.      
	\end{abstract}
	\pagestyle{myheadings}
	\thispagestyle{plain}
	\markboth{DEGENERATE PARABOLIC EQUATIONS WITH GENERAL SYMMETRIC COEFFICIENTS}{}
	
	\maketitle
	
	\section{Introduction}
	Degenerate equations have generated a great amount of   interest   in recent years (e.g.  \cite{Allal2021null, allal2020lipschitz,benoit2023null,boutaayamou2018carleman,boutaayamou2016null,de2023null, liu2019carleman, liu2024observability, wu2020carleman}) since  they can  used to describe many physical phenomena. For example, the famous Crocco equation is a degenerate parabolic equation,  reflecting  the compatibility relationship between the change in total energy and entropy in steady flow and vorticity. Tornadoes follow the Crocco equation during the rotation process, and thus, the study of controllability and optimal control problems of the Crocco equation is of great significance in meteorology (see \cite{martinez2003regional}). 
	Similarly, the famous Black-Scholes equation (see \cite{sakthivel2008exact}), which is widely studied in finance, and the Kolmogorov equation (see \cite{calin2009heat,calin2010heat}) are also degenerate parabolic equations with critical practical applications in real life.  
	
	In recent years, the controllability of  degenerate parabolic equations has received great attention. In \cite{cannarsa2004persistent}, the authors introduce  the concepts of regional null controllability and regional persistent null controllability, and obtain   the regional controllability for a   linearized Crocco type equation and also for the nondegenerate heat equation in unbounded domains. This stimulates   further studies on  the controllability of one-dimensional degenerate heat equations and other one-dimensional degenerate parabolic equations   in subsequent works \cite{Allal2021null,araujo2022boundary, CA5,flores2010carleman,flores2020null, lin2007some,wang2023carleman}.  For  the controllability of high-dimensional degenerate equations,  the two-dimensional Grushin-type operator has been  studied  (e.g.  \cite{anh2013null,banerjee2022carleman,beauchard20152d}) 
	because this 
	operator 
	typically degenerates along only one edge, which  often renders its study more tractable than other more intricately structured operators.
	The idea  (e.g. \cite{beauchard20152d}) to deal with  this  Grushin-type operator    is to  establish  the Carleman estimate   by using the Fourier decomposition,
	and then    the null controllability is obtained.  
	There are also some   results on the controllability of other two-dimensional degenerate equations,   
	one of which is 
	\begin{equation}\label{2016}
		\begin{cases}
			\partial_{t}u - \Div(A\nabla u) + bu=\chi_{\omega_{0}}g, & \mbox{in} \ Q,  \\
			u=0 \ \mbox{if} \ \alpha\in (0,1),  & \mbox{on} \ \Sigma,  \\
			A\nabla u\cdot \nu =0 \ \mbox{if} \ \alpha\in (1,2),  & \mbox{on} \ \Sigma,  \\
			u(0)= u_{0},  & \mbox{in} \ \Omega,  
		\end{cases}
	\end{equation}
	where   $\Omega$ is a bounded open set of $\mathbb{R}^2$ with boundary $\Gamma$,  $Q:= \Omega\times(0,T)$
	and $A=(a_{ij})_{i,j=1,2}$ is a symmetric and nonnegative definite $2\times 2$ matrix valued function defined on $\overline{\Omega}$.  Under the the conditions that the boundary $\Gamma$ is $C^4$ and one of the eigenvalue of $A$ is not degenerated, the authors of \cite{cannarsa2016global}  can still  obtain   global Carleman estimates    and then further obtain the controllability for above    equation. 
	This paper is motivated by the above work and our contributions are summarized as follows. 
	\begin{itemize}
		\item [($1$)] We   allow the   boundary $\Gamma$ to be Lipschitzian. As can be seen in     \cite[Chapter 2]{cannarsa2016global}, the $C^4$  condition on the boundary is a necessary  to construct a $C^3$- diffeomorphism   so  that it is possible to establish the Carleman estimate near the boundary. 
		It is possible to improve from  $C^4$ to $C^{3\frac{1}{2}}$. But it is hard to go further in their argument to allow the Lipschitz boundary. We need to use a new method, which will be explained later in details. 
		\item [($2$)] We  shall consider  general   $n$-dimensional degenerate parabolic equations 
		instead of only two-dimensional,  which is more challenging. To our best knowledge, there has been no previous   work  addressing  the controllability  problem  in higher dimensions (e.g.  $n\ge 3$). This paper is motivated to fill this gap. 
		\item [($3$)]  We allow  the  boundary to be  degenerate partially, which includes  whole boundary degeneracy for  system \eqref{2016} studied in \cite{cannarsa2016global}   as  a special case.
		\item [($4$)] Our control system \eqref{1.1} can degenerate on  all eigenvalues of $A(x)$, and we make very limited assumptions     (see Assumption \ref{assume1}). This significantly extends the applicability of our results. 
	\end{itemize}

	Compared with non degenerate equations, the solution of degenerate ones  does not have sufficient regularity in the whole domain $\Omega$, so whether the partial  integration by parts  holds and whether the boundary integration term is meaningful are  key issues we need to address.  The way most researchers adopt is   to   establish   the weighted Hardy inequality to prove the  existence of traces so that to make the  boundary integral meaningful.  We refer to 
	(\cite{BSV,cannarsa2016global}) for using  this method to 
	obtain  the Carleman estimates of two-dimensional degenerate parabolic equations. Although  requiring   more assumptions, it  makes the selection of control regions more flexible in the obtained Carleman estimates.  For example, in \cite{cannarsa2016global}, the control  region $\omega_0$ can be very general and   needs only to be a nonempty open subset of $\Omega$. In \cite{BSV}, the authors weakens the condition for boundary $\Gamma$, but they require   $A(x)$ to be a specific functional form: $A(x) = \diag(x_1^{\alpha_1},x_2^{\alpha_2})$.  Moreover,    in  \cite{BSV} the control region $\omega_0$ cannot be as general in   \cite{cannarsa2016global} and it   needs to contain the  partial degenerate boundary. 
	As in    \cite{BSV} the trade-off  of  our method is that  the control region $\omega_0$ cannot be general either and  needs to include the entire degenerate boundary as well as the  non $C^2$  boundary.  In a future work,  we plan to use the approximation idea to prove the approximate controllability when the control region is an open subset of $\Om$, and the method is similar to our other work \cite{liu2023some}, where  we use the $\epsilon$ approximation theory to control the integrals over degenerate regions by those over non-degenerate regions to study controllability in the case of interior single-point degeneracy.  
	It is worth noting that there are significant differences between this article and \cite{liu2023some}, where the Carleman estimates are for high-dimensional but with single interior point degeneracy. There is a fundamental difference between internal degeneracy and boundary degeneracy, with the latter being more affected by boundary conditions. In \cite{liu2023some}, authors have established Carleman estimates for both cases: $0 \in \omega_0$ and $0 \notin \omega_0$, and the focus is on the carleman estimates in case $0 \notin \omega_0$. For each solution of the dual equation, it is introduced a sufficiently small $\epsilon$,  such that the integral terms over the degenerate region can be estimated by the integral terms over the non-degenerate region.
	This makes the coefficients $C$ in the final Carleman estimate dependent on the solution of dual equation, which implies only approximate controllability but not null controllability.
	
	Now let us give more details about the new method we mention earlier in (1).  Due to our weaker  smoothness conditions on the  boundary and more general assumptions on  $A(x)$,  we cannot expect to obtain the existence of traces on the degenerate boundary. Therefore, we cannot expect to  use the Hardy inequality or to prove the existence of traces on boundary $\Gamma$. Thus,  we need to follow   a   new method to get around of  the problem of whether partial integration can hold. Our idea is to  ``cut off'' the control region.  We set the weight in  a small region containing degenerate boundaries to be $0$, so that the integral term on this region is equal to $0$ and hence  we replace the integral over the degenerate region with the integral over only  non-degenerate region  in  the Carleman  estimates. These integral terms include both the internal integral term in $\Omega$ and the boundary integral term on $\partial\Omega$.  We transform these terms into integrals
	over sub-regions $\hat{\Om}$ and $\partial\hat{\Om}$, and then estimate them separately. 
	Fortunately,  we can still use our new  Carleman estimates to establish  the null controllability  for  our control system.  Furthermore, we can continue to consider the optimal control problem like \cite{chen2018time, marinoschi2017optimal,zheng2024global}, or combine degradation with stochastic (see \cite{liu2019carleman,wu2020carleman}). 
	The remaining part of this paper are structured as follows. In Section 2, we describe  the main problems and state the key results:  e.g.  Theorem \ref{exist1}, Theorems \ref{TH2}-\ref{TH3},   in particular, in Section 2.2, we provide some specific examples of applications. In Section 3, we provide some auxiliary  results and show  the well-posedness of problem \eqref{1.1} (Theorem \ref{exist1}). In Section 4, we give  Carleman estimates for the degenerate equation \eqref{1.1} and prove Theorems \ref{TH2}-\ref{TH4}.

	\section{main results}
	
	In this section   we first present  our main results for general  degenerate  symmetric 
	parabolic equations.  This result is then applied to  an interesting  specific case which is not covered by earlier results.  
	
	\subsection{General degenerate symmetric form}
	
	\hspace*{\fill}\\
	
	We shall  consider the following parabolic equation  in a bounded domain $\Omega\subset \mathbb{R}^n$ with Lipschitz boundary $\Gamma=\partial \Omega$: 
	\begin{equation}\label{1.1}
		\begin{cases}
			\partial_{t}z - \Div(A\nabla z) =\chi_{\omega_{0}}g, & \mbox{in} \ Q,  \\
			z=0 \ \mbox{or} \ A\nabla z \cdot \nu =0,  & \mbox{on} \ \Sigma,  \\
			z(0)= z_{0},  & \mbox{in} \ \Omega.  
		\end{cases}
	\end{equation}
	Here, $Q:=\Omega\times (0,T)$ and $\Sigma:= \Gamma\times(0,T)$ denote the space-time domain and space time boundary, respectively.  The   initial data $z_0 \in L^{2}(\Omega)$,  $\nu$ is the unit outward normal vector on the boundary.      $\omega_{0}$  is the control domain  and as usual, $\chi_{\omega_{0}}$ denotes  the   indicate  function, and   $g \in L^{2}(Q)$ is the control function.  As we state in introduction  the particularity of our problem is that  part of 
	boundary may not be $C^2$, denote by $\Gamma_1$, and the  coefficient $A$ is allowed to be degenerate  at some sub-region of the boundary, denote by $\Gamma_2$, that is to say, $A(x)=0$ on $\Gamma_2$, $A$ does not satisfy the uniform ellipticity condition on $\Gamma_2$.    To describe the degeneracy that we are going to handle, we use $\Gamma_0\subset \Gamma $ to denote a possibly  degenerate and non-$C^2$ sub-boundary  of $\Gamma$, i.e., $\Gamma_0=\Gamma_1\cup\Gamma_2$. It is known  that the solution $z$ cannot improve regularity on $\Gamma_0$.  The control domain $\omega_{0}$ contains  $ \left\lbrace x\in \Omega \ | \  d(x,\Gamma_0)< \rho\right\rbrace (\subset \omega_{0}$)  for some given constant $\rho >0$.  Let  $\hat{\Omega}$ be a nonempty open subset of $\Omega$ with $C^2$ boundary, satisfying  $\Omega \backslash \bigcup_{x_0\in\Gamma_0} B(x_0, \frac{2\rho}{3}) \subset \hat{\Om} \subset \Omega \backslash \bigcup_{x_0\in\Gamma_0} B(x_0, \frac{\rho}{3})$.  $\hat{\Omega}$ may include partial boundaries but does not contain degenerate boundaries.  Let $\Omega_0$ be an open set with $C^2$ boundary satisfying $\Omega\backslash\omega_{0} \subset \Omega_0 \subset \Omega \backslash \bigcup_{x_0\in\Gamma_0} B(x_0, \frac{2\rho}{3})$.
	Obviously, $\Om_0$ is a proper subset of $\hat{\Om}$. We can refer to  Figure \ref{fig.1}  to visually understand the relationship between $\omega_{0}$, $\hat{\Om}$, and $\Om_0$.
	
	\begin{figure} 
		\begin{center}
			{  \includegraphics[width=0.8\textwidth]{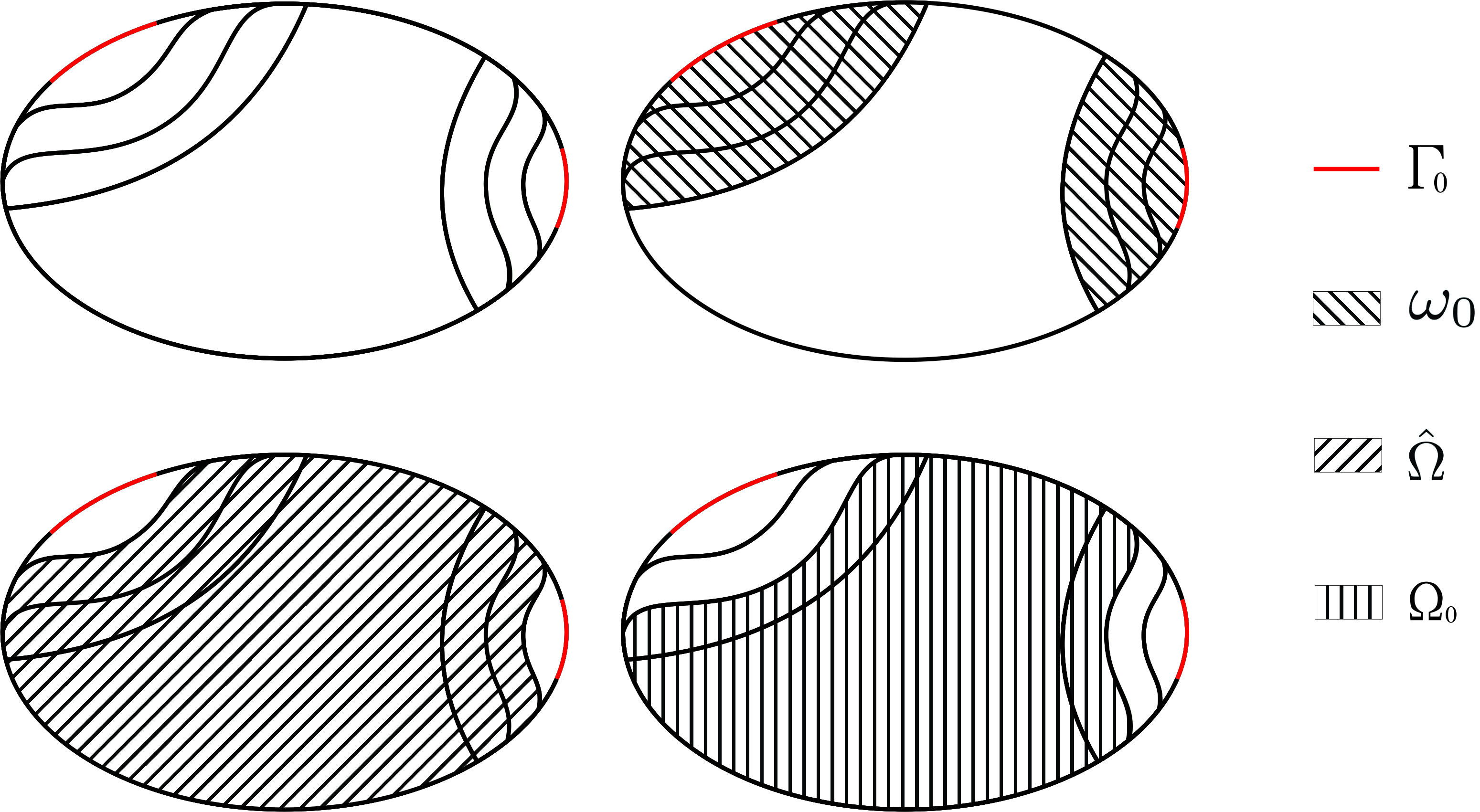}  }
			\renewcommand\figurename{Fig.}
			\caption{Illustration of the sets $\omega_{0}$ and $\hat{\Om}$, and $\Om_0$.
			}\label{fig.1}
		\end{center}
	\end{figure}

	We impose the following assumptions on the matrix-valued function $A=(A_{ij}(x))_{i,j=1}^n$:
	
	\begin{assumption}\label{assume1}
		The matrix-valued function $A=(A_{ij}(x))_{i,j=1}^n : \mathbb{R}^{n} \to \mathbb{R}^{n\times n}$ is measurable, symmetric and positive definite for all $x \in \Om$, and $A\in C^2(\Om)$. It may vanish on  a subset of $\partial\Omega$ ($A=0$ on degenerate boundary),   but 
		on $\hat{\Om}$ it  satisfies the uniform ellipticity condition: 
		$$
		\ \beta|\xi|^2 \le \sum_{i,j=1}^n A_{ij}(x)\xi_i\xi_j \le \Lambda |\xi|^2, \quad  \forall \xi\in\mathbb{R}^n, \  \forall x \in \hat{\Om}, 
		$$
		for certain   two fixed constants $0<\beta \le \Lambda<\infty$.
	\end{assumption}
	From Assumption \ref{assume1}, it is known that the equation \eqref{1.1} is degenerate on a part of the boundary, but non-degenerate in the interior of $\Omega$.
	
	Below, we introduce two solution spaces, denoted by $\mathcal{H}^1(\Omega)$ and $\mathcal{H}^2(\Omega)$, corresponding respectively to the Dirichlet boundary condition   and Neumann boundary condition:
	\begin{equation}\label{space}
		\begin{split}
			\mathcal{H}^1(\Om)
			&=\left\lbrace z\in L^2(\Om) \mid \nabla z \cdot A \nabla z \in L^1(\Om) \right\rbrace,\\
			\mathcal{H}^2(\Om)
			&=\left\lbrace z\in \mathcal{H}^1(\Om) \mid \Div(A\nabla z) \in L^2(\Om) \right\rbrace.
		\end{split}
	\end{equation}
	These spaces are equipped with the following scalar products:
	\begin{equation}\label{inner}
		\begin{split}
			(z,v)_{\mathcal{H}^1(\Om)}
			&:=\int_{\Omega} zv  dx  + \int_{\Omega} \nabla z \cdot A \nabla v dx,\\
			(z,v)_{\mathcal{H}^2(\Om)}
			&:=\int_{\Omega} zv  dx  + \int_{\Omega} \nabla z \cdot A \nabla v dx + \int_{\Omega} \Div(A\nabla z)\Div(A\nabla v) dx 
		\end{split}
	\end{equation}
	or in terms of the corresponding Hilbert  norms:
	\begin{equation}\label{norm}
		\begin{split}
			\left\| z \right\|_{\mathcal{H}^1(\Om)} 
			&= \left\| z \right\|_{L^2(\Om)} + \left\| \nabla z \cdot A \nabla z\right\|_{L^1(\Om)},\\
			\left\| z \right\|_{\mathcal{H}^2(\Om)} 
			&= \left\| z \right\|_{\mathcal{H}^1(\Om)} + \left\| \Div(A\nabla z)\right\|_{L^2(\Om)}.
		\end{split}
	\end{equation}
	It can be easily verified that $(\mathcal{H}^1(\Omega), (\cdot,\cdot)_{\mathcal{H}^1(\Omega)})$ and $(\mathcal{H}^2(\Omega), (\cdot,\cdot)_{\mathcal{H}^2(\Omega)})$ are Hilbert  spaces.
	
	Let $\mathcal{H}_0^1(\Omega)$ denote the closure of $C_0^\infty(\Omega)$ in the space $\mathcal{H}^1(\Omega)$. In other words,
	\begin{equation*}
		\mcH_0^1(\Omega)=\overline{C_0^\infty(\Omega)}^{\mcH^1(\Omega)}.
	\end{equation*} 
	We also introduce the operators $(\mathcal{A}_1,D(\mathcal{A}_1))$ and $(\mathcal{A}_2,D(\mathcal{A}_2))$ corresponding respectively to the Dirichlet  and Neumann boundary  problems   as follows:
	\begin{equation}
		\left\{	\begin{split}
			&\mathcal{A}_1 z= \sum_{i,j=1}^n\partial_{x_i}(A_{ij}\partial_{x_j}z),  \quad z\in D(\mathcal{A}_1) = \mathcal{H}^2 (\Om)\cap \mcH_0^1(\Omega) ,\\
			&\mathcal{A}_2 z= \sum_{i,j=1}^n\partial_{x_i}(A_{ij}\partial_{x_j}z),  \quad z\in  D(\mathcal{A}_2) = \left\lbrace z\in \mathcal{H}^2 (\Om) \mid A\nabla z \cdot \nu |_{\Sigma} = 0 \right\rbrace .
		\end{split}\right. \label{e.2.5} 
	\end{equation}
	
	The first result of this paper is about the well-posedness of	equation \eqref{1.1} and an estimate of its solution, whose proof is given in next section. 
	\begin{theorem}\label{exist1}
		For any $g \in L^2(Q)$ and any $z_0 \in L^2(\Omega)$, there exists a unique solution $z \in C^0([0,T];L^2(\Omega)) \cap L^2(0,T;\mathcal{H}_0^1(\Omega))$ to equation \eqref{1.1} with homogeneous Dirichlet boundary condition. Moreover, there exists a positive constant $C$ such that
		$$
		\sup_{t\in\left[0,T\right] } \|z(t)\|_{L^2(Q)}^2 + \int_{0}^{T} \left\| z(t)\right\|^2_{\mcH_0^1(\Omega)} d t \le C(\left\| z_0\right\|_{L^2(\Om)}^2 + \|\chi_{\omega_{0}} g\|_{L^2(Q)}^2).
		$$
		Similarly, for any $g \in L^2(Q)$ and any $z_0 \in L^2(\Omega)$, there exists a unique solution $z \in C^0([0,T];L^2(\Omega)) \cap L^2(0,T;\mathcal{H}^1(\Omega))$ to equation \eqref{1.1} with homogeneous Neumann boundary condition. Furthermore, there exists a positive constant $C$ such that
		$$
		\sup_{t\in\left[0,T\right] } \|z(t)\|_{L^2(Q)}^2 + \int_{0}^{T} \left\| z(t)\right\|^2_{\mcH^1(\Omega)} d t \le C(\left\| z_0\right\|_{L^2(\Om)}^2 + \|\chi_{\omega_{0}} g\|_{L^2(Q)}^2).
		$$
	\end{theorem}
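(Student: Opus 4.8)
\noindent\emph{Proof plan.}\; The plan is to treat \eqref{1.1} by the classical variational (Lions) theory of abstract parabolic equations, working entirely with the bilinear form and \emph{deliberately avoiding any Green's formula on $\Gamma_0$} (where it would be meaningless). Set $H=L^2(\Om)$, and let $V=\mcH_0^1(\Om)$ in the Dirichlet case and $V=\mcH^1(\Om)$ in the Neumann case; in both cases $(V,(\cdot,\cdot)_{\mcH^1(\Om)})$ is a separable Hilbert space (the isometry $z\mapsto(z,A^{1/2}\nabla z)$ embeds $V$ into the separable space $L^2(\Om)\times L^2(\Om;\RR^n)$, while completeness is the remark following \eqref{norm}). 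Since $C_0^\infty(\Om)\subset V\subset H$ and, by \eqref{inner}, $\|z\|_H\le\|z\|_V$, one gets a Gelfand triple $V\hookrightarrow H\hookrightarrow V'$ with continuous and dense embeddings. Define the time-independent form
$$a(z,v):=\int_\Om \nabla z\cdot A\nabla v\,dx,\qquad z,v\in V.$$

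Next I would record the two structural properties of $a$. Boundedness on $V$: applying the pointwise Cauchy--Schwarz inequality to the nonnegative symmetric form $\xi^\top A(x)\eta$ and then the scalar Cauchy--Schwarz inequality in $x$ gives $|a(z,v)|\le a(z,z)^{1/2}a(v,v)^{1/2}\le\|z\|_V\|v\|_V$. Gårding inequality: $a(z,z)=\int_\Om\nabla z\cdot A\nabla z\,dx\ge 0$, and directly from \eqref{inner} one has $a(z,z)+\|z\|_H^2=\|z\|_V^2$, hence $a(z,z)+\|z\|_H^2\ge\|z\|_V^2$ with coercivity constant $1$. Note that $A\in C^2(\Om)$ and Assumption~\ref{assume1} enter here only to make $a$ well defined on $C_0^\infty(\Om)$ and to give $a(z,z)\ge 0$; uniform ellipticity on all of $\Om$ is \emph{not} used, which is exactly why the argument tolerates degeneracy of $A$ on $\Gamma_0$.

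With these properties in hand, and observing that the source $f:=\chi_{\omega_0}g$ lies in $L^2(0,T;H)\hookrightarrow L^2(0,T;V')$ with $\|f\|_{L^2(0,T;V')}\le\|\chi_{\omega_0}g\|_{L^2(Q)}$ while $z_0\in H$, the classical theorem of J.-L. Lions on variational parabolic problems yields a unique $z\in L^2(0,T;V)$ with $z'\in L^2(0,T;V')$ such that $\langle z'(t),v\rangle_{V',V}+a(z(t),v)=(f(t),v)_H$ for a.e.\ $t\in(0,T)$ and all $v\in V$, with $z(0)=z_0$; and then $z\in C^0([0,T];H)$ by the standard embedding $\{w\in L^2(0,T;V):w'\in L^2(0,T;V')\}\hookrightarrow C^0([0,T];H)$. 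This $z$ is precisely the solution asserted: for $V=\mcH_0^1(\Om)$ the membership $z(t)\in\mcH_0^1(\Om)$ encodes $z|_\Sigma=0$, and for $V=\mcH^1(\Om)$ the condition $A\nabla z\cdot\nu=0$ on the non-degenerate part of $\Sigma$ is the natural boundary condition built into the test space.

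Finally, the energy estimate follows by testing with $v=z(t)$ and using $\tfrac12\frac{d}{dt}\|z\|_H^2=\langle z',z\rangle$: this gives $\tfrac12\frac{d}{dt}\|z\|_H^2+a(z,z)=(f,z)_H\le\tfrac12\|f\|_H^2+\tfrac12\|z\|_H^2$, so $\frac{d}{dt}\|z\|_H^2+2a(z,z)\le\|f\|_H^2+\|z\|_H^2$. Integrating on $[0,t]$, dropping the nonnegative term, and applying Grönwall's inequality bounds $\sup_{[0,T]}\|z(t)\|_H^2$ by $C(\|z_0\|_{L^2(\Om)}^2+\|\chi_{\omega_0}g\|_{L^2(Q)}^2)$; reinserting this into the integrated inequality at $t=T$ bounds $\int_0^T a(z(t),z(t))\,dt$ by the same quantity, and since $\|z(t)\|_V^2=\|z(t)\|_H^2+a(z(t),z(t))$ we obtain $\int_0^T\|z(t)\|_V^2\,dt\le C(\|z_0\|_{L^2(\Om)}^2+\|\chi_{\omega_0}g\|_{L^2(Q)}^2)$, i.e.\ the claimed inequality with $V=\mcH_0^1(\Om)$ (Dirichlet) resp.\ $V=\mcH^1(\Om)$ (Neumann); uniqueness is immediate from this estimate by linearity. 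I do not expect a genuine obstacle here; the one point to watch is the temptation to identify $D(\mathcal{A}_i)$ with $\mcH^2(\Om)\cap(\cdots)$ through integration by parts, which would require a Green's formula valid up to $\Gamma_0$ that is not available — but that identification is not needed for this theorem, the variational solution being well defined without it.
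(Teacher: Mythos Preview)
Your variational (Lions) argument is correct and complete. The paper takes a different but equally standard route: it works with the unbounded operators $(\mathcal{A}_i,D(\mathcal{A}_i))$ from \eqref{e.2.5}, shows via a short lemma that they are self-adjoint and dissipative on $L^2(\Om)$ (using Green's formula to identify $q_i(u,v)=-\int_\Om (\mathcal{A}_i u)v\,dx$, and Lax--Milgram for maximality), and then invokes the Hille--Yosida / semigroup machinery to produce the mild solution and the a~priori bound. Your approach stays entirely at the level of the bilinear form $a(\cdot,\cdot)$ on the energy space $V$, which has the concrete advantage you yourself point out: it never needs an integration-by-parts identity valid up to $\Gamma_0$, whereas the paper's Green's formula step tacitly assumes such an identity for $u\in D(\mathcal{A}_i)$. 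In exchange, the semigroup route gives a slightly stronger regularity statement for smooth data (a classical $C^1$-in-time solution when $z_0\in D(\mathcal{A}_i)$) and makes the contraction property of the evolution explicit. For the theorem as stated --- $L^2$ data, the $C^0([0,T];L^2)\cap L^2(0,T;V)$ class, and the energy estimate --- the two approaches are equivalent, and yours is arguably the cleaner one in this degenerate setting.
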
 
	\begin{remark}
		It is worth noting that by adding a gradient term $c\nabla z$ to the left-hand side of equation \eqref{1.1} and imposing stronger conditions on the degenerate coefficient $A_{ij}(x)$, similar well-posedness results can also be obtained.
	\end{remark}
	
	
	
	
	Let us consider the adjoint equation corresponding to \eqref{1.1}:
	\begin{equation}\label{3.1}
		\begin{cases}
			\partial_{t}w + \Div(A\nabla w)=f, & \mbox{in} \ Q,  \\
			w=0 \ \mbox{or} \ A\nabla w \cdot \nu =0,  & \mbox{on} \ \Sigma,  \\
			w(  T)= w_T,  & \mbox{in} \ \Omega,  
		\end{cases}
	\end{equation}
	where $f\in L^2(Q)$ and $w_T \in L^2(\Omega)$. The main results of this paper are the Carleman inequality and observability inequality stated below.  To state the Carleman inequality, let us 
	consider a function $\eta$ satisfying
	\begin{equation}\label{ETA1}
		\eta \in C_0^3(\Omega), \quad
		\eta(x)  
		\begin{cases}
			=0, & x \in \Omega\backslash\hat{\Om},\\
			>0, & x \in \hat{\Om},
		\end{cases}
	\end{equation}
	and
	\begin{equation}\label{ETA2}
		\quad \left\| \nabla \eta \right\|_{L^2(\Om)} \ge C >0 \ \mbox{in} \ \Om_0, \quad  \nabla \eta=0 \ \mbox{on} \ \partial \hat{\Om}- \partial \hat{\Om}\cap \partial \Om.
	\end{equation}
	Define
	\begin{equation}
		\begin{split}
			& \theta(t):=[t(T-t)]^{-4}, \quad \xi(x, t):=\theta(t) e^{ \lambda(8|\eta|_\infty+\eta (x))}, \quad \sigma(x, t):=\theta(t) e^{10 \lambda|\eta|_\infty}-\xi(x, t).
		\end{split}\label{e.2.7} 
	\end{equation}
	In what follows, $C>0$ represents a generic constant, and $w$ denotes a solution of equation \eqref{3.1}. We can assume, using standard arguments, that $w$ possesses sufficient regularity. Specifically, we consider $w \in H^1(0,T; \mathcal{H}_0^1(\Omega))$ with homogeneous Dirichlet boundary conditions and $w \in H^1(0,T; \mathcal{H}^1(\Omega))$ with homogeneous Neumann boundary conditions.
	In the case of Dirichlet boundary conditions, i.e., for the equation
	\begin{equation}\label{Dirichlet}
		\begin{cases}
			\partial_{t}w + \Div(A\nabla w)=f, & \mbox{in} \ Q,  \\
			w=0,  & \mbox{on} \ \Sigma,  \\
			w(  T)= w_T,  & \mbox{in} \ \Omega,  
		\end{cases}
	\end{equation}
	we have the following Carleman inequality.
	\begin{theorem}\label{TH2}
		There exist positive constants $C, s_0, \lambda_0$ such that for any $\lambda \ge \lambda_0$, $s \ge s_0$, and any solution $w$ to \eqref{Dirichlet}, the following inequality holds:
		\begin{equation}\label{3.2}
			\begin{split}
				&s^{-1}\iint_{Q} \xi^{-1} (|\partial _t w  |^2 + \left|\Div(A\nabla w)\right|^2) dx dt \\
				&+C\iint_Q s^3 \lambda^4 \xi^3\left|A \nabla \eta \cdot \nabla \eta \right|^2|w|^2 dx dt  + C s  \lambda^2 \iint_Q\xi|\nabla w \cdot A \nabla \eta|^2 dx  d t\\
				&\qquad\quad \leq  C\left\|e^{-s \sigma} f\right\|^2
				+Cs^3 \lambda^3 \int_0^T \int_{\omega_{0}} \xi^3|w|^2dx  d t.
			\end{split}
		\end{equation}
	\end{theorem}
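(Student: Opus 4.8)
The plan is to follow the classical Fursikov--Imanuvilov route for Carleman estimates, but carefully localized to the non-degenerate subdomain $\hat{\Om}$ by exploiting the crucial feature that the weight $\eta$, hence $\xi$ and $\sigma$, vanishes identically on $\Omega\setminus\hat{\Om}$. First I would introduce the change of variables $v := e^{-s\sigma} w$, where $w$ solves \eqref{Dirichlet}, and compute the equation satisfied by $v$. Because $\sigma$ and all its spatial derivatives vanish outside $\hat{\Om}$, the conjugated operator $P_s v := e^{-s\sigma}(\partial_t + \Div(A\nabla\,))(e^{s\sigma}v)$ coincides with the plain operator $\partial_t + \Div(A\nabla\,)$ on $Q\setminus(\hat{\Om}\times(0,T))$, so all the ``dangerous'' boundary terms that would arise from integration by parts on the degenerate/non-smooth part $\Gamma_0$ are multiplied by weights that are identically zero there; this is exactly the point of the construction of $\eta$ in \eqref{ETA1}. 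I would split $P_s v = P_1 v + P_2 v + R v$ into its formally self-adjoint part $P_1$, skew-adjoint part $P_2$, and a remainder $R$ of lower order, taking care (as usual) to assign the cross-term $2s\lambda^2\xi\, A\nabla\eta\cdot\nabla\eta\, v$ to whichever side makes the book-keeping cleanest.

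Next I would expand $\|P_1 v + P_2 v\|_{L^2(Q)}^2 = \|P_1 v\|^2 + \|P_2 v\|^2 + 2(P_1 v, P_2 v)$ and compute the cross term $2(P_1 v, P_2 v)$ by integration by parts in $x$ and $t$. Every spatial integration by parts is legitimate here: it is performed over $\hat{\Om}$ (or can be taken over all of $\Omega$ with the understanding that the integrand vanishes near $\Gamma_0$), whose boundary $\partial\hat{\Om}$ is $C^2$ by hypothesis, and on the portion $\partial\hat{\Om}\setminus(\partial\hat{\Om}\cap\partial\Omega)$ we have $\nabla\eta = 0$ by \eqref{ETA2}, killing the bad boundary contributions there, while on $\partial\hat{\Om}\cap\partial\Omega$ the Dirichlet condition $w=0$ (equivalently $v=0$) disposes of the rest. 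Collecting the dominant contributions from the cross term gives, for $\lambda$ and $s$ large, the lower bound $\gtrsim s^3\lambda^4\iint \xi^3 |A\nabla\eta\cdot\nabla\eta|^2 |v|^2 + s\lambda^2\iint \xi |\nabla v\cdot A\nabla\eta|^2$, absorbing all lower-order terms (those with smaller powers of $s$ or $\xi$) into the left-hand side using the uniform ellipticity of $A$ on $\hat{\Om}$ from Assumption \ref{assume1} and the bounds on $\eta\in C_0^3$. Adding the trivial estimate $\|P_1 v\|^2 + \|P_2 v\|^2 \ge$ (something controlling $s^{-1}\xi^{-1}(|\partial_t v|^2 + |\Div(A\nabla v)|^2)$ after redistributing) yields, after translating back via $w = e^{s\sigma}v$ and the identity $\xi^{-1}e^{-2s\sigma}|\partial_t w|^2 \lesssim \xi^{-1}|\partial_t v|^2 + s^2\xi|v|^2$ etc., an estimate of the form $\text{(LHS of \eqref{3.2})} \lesssim \|e^{-s\sigma}f\|^2 + s^3\lambda^3\int_0^T\int_{\hat{\Om}}\xi^3|v|^2$.

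The final step is the standard absorption of the global interior term $s^3\lambda^3\iint_{\hat{\Om}\times(0,T)}\xi^3|w|^2$ by a local one over $\omega_0$. Here I would choose a cutoff $\zeta\in C_0^\infty(\Omega)$ with $\zeta\equiv 1$ on $\Om_0$ and $\supp\zeta\subset\omega_0$ (possible since $\Omega\setminus\omega_0\subset\Om_0$ and $\Om_0\Subset\hat\Om$), test the equation for $w$ against $s^3\lambda^3\xi^3\zeta^2 w\,e^{-2s\sigma}$, integrate by parts, and use the non-degeneracy on $\hat{\Om}$ together with $\|\nabla\eta\|\ge C>0$ on $\Om_0$ (from \eqref{ETA2}) so that the term $s^3\lambda^4\xi^3|A\nabla\eta\cdot\nabla\eta|^2|w|^2$ already on the left dominates $s^3\lambda^3\xi^3|w|^2$ on $\Om_0$, leaving only the contribution from $\hat\Om\setminus\Om_0\subset\omega_0$, which is of the desired form $Cs^3\lambda^3\int_0^T\int_{\omega_0}\xi^3|w|^2$, plus terms absorbable by the left-hand side and by $\|e^{-s\sigma}f\|^2$ for $s$ large. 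The main obstacle, and the place requiring the most care, is precisely ensuring at every integration by parts that the boundary contributions on $\Gamma_0$ genuinely vanish rather than merely being ``small'': this is where the support condition \eqref{ETA1} on $\eta$ and the flatness condition $\nabla\eta=0$ on $\partial\hat\Om\setminus\partial\Omega$ in \eqref{ETA2} must be invoked repeatedly, and one must check that no derivative falling on $\xi^3 = \theta^3 e^{3\lambda(8|\eta|_\infty+\eta)}$ produces a surviving boundary term there — which it cannot, since $\nabla\xi = \lambda\xi\nabla\eta$ vanishes wherever $\nabla\eta$ does. A secondary technical point is justifying the integration by parts at the level of regularity available ($w\in H^1(0,T;\mathcal H_0^1(\Omega))$), which the paper has pre-arranged by the density remark preceding the theorem; one then passes to general $w$ by approximation using Theorem \ref{exist1}.
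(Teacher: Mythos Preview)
Your proposal is correct and follows essentially the same route as the paper: conjugate by $e^{-s\sigma}$, split the conjugated operator into $P_1+P_2$, expand the cross term $(P_1 v,P_2 v)$ by integration by parts carried out on $\hat\Omega$, kill boundary contributions on $\partial\hat\Omega\setminus\partial\Omega$ via $\nabla\eta=0$ and on $\partial\hat\Omega\cap\partial\Omega$ via the Dirichlet condition, then localize the residual right-hand side to $\omega_0$ using $|\nabla\eta|\ge C$ on $\Omega_0$ together with a cutoff. One small sharpening worth making explicit: on $\partial\hat\Omega\cap\partial\Omega$ the gradient boundary terms (the paper's $B_3+B_4$) do \emph{not} vanish---they combine, using that both $\nabla v$ and $\nabla\eta$ are parallel to $\nu$ there (since $v=\eta=0$ on that piece of $\partial\Omega$), into $s\lambda\int \xi\,|\nabla v|^2|\nabla\eta|\,(A\nu\cdot\nu)^2\ge 0$, so ``disposes of the rest'' should be read as ``has the right sign'' rather than ``equals zero''.
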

	However, in the case of Neumann boundary conditions, i.e., for the equation
	\begin{equation}\label{Neumann}
		\begin{cases}
			\partial_{t}w + \Div(A\nabla w) =f, & \mbox{in} \ Q,  \\
			A\nabla w\cdot \nu=0,  & \mbox{on} \ \Sigma,  \\
			w(  T)= w_T,  & \mbox{in} \ \Omega,  
		\end{cases}
	\end{equation}
	the Carleman inequality takes a slightly different form.  
	\begin{theorem}\label{TH22}
		There exist positive constants $C, s_0, \lambda_0$ such that for any $\lambda \ge \lambda_0$, $s \ge s_0$, and any solution $w$ to \eqref{Neumann}, the following inequality holds:
		\begin{equation}\label{CarlemanN}
			\begin{split}
				C\iint_Q&  \left(  s^{-1} \xi^{-1} \left( |\partial _t w|^2 + |\Div(A\nabla w)|^2\right)  + s\lambda^2 \xi|A\nabla w \cdot \nabla \eta|^2 + s\lambda^2 \xi A\nabla w \cdot \nabla w\right. \\
				&\left. \qquad+ s^3 \lambda^4 \xi^3 w^2 dx \right) \left(e^{2s\sigma} + e^{2s\wt{\sigma}} \right)    d t\\
				\le& C \iint_Q |f|^2\left(e^{2s\sigma} + e^{2s\wt{\sigma}} \right) dx dt 
				+ C\int_{0}^{T}\int_{\om_0} s^3\lambda^4\xi^3 w^2 \left(e^{2s\sigma} + e^{2s\wt{\sigma}} \right) dxdt.
			\end{split}
		\end{equation}
	\end{theorem}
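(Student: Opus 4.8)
The plan is to reduce \eqref{CarlemanN} on the degenerate cylinder $Q$ to a classical global parabolic Carleman estimate on the non-degenerate sub-cylinder $\hat Q:=\hat\Omega\times(0,T)$, exploiting that $\eta$ has been engineered (conditions \eqref{ETA1}--\eqref{ETA2}) to be flat outside $\hat\Omega$. First I would run, for the dense class of regular solutions $w$ indicated just before the statement, the Fursikov--Imanuvilov machinery: with the substitution $v:=e^{-s\sigma}w$ the conjugated operator $P_sv:=e^{-s\sigma}\big(\partial_t+\Div(A\nabla\cdot)\big)\!\big(e^{s\sigma}v\big)$ splits as $P_s=P_1+P_2$ into its formally self-adjoint and skew-adjoint parts, where $P_1v=\Div(A\nabla v)+s^2\lambda^2\xi^2(A\nabla\eta\cdot\nabla\eta)v$ and $P_2v=\partial_tv-2s\lambda\xi\,A\nabla\eta\cdot\nabla v$, with all terms of lower order in $s$ moved to a remainder. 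From the identity $\|P_1v\|_{L^2(Q)}^2+\|P_2v\|_{L^2(Q)}^2+2(P_1v,P_2v)_{L^2(Q)}=\|e^{-s\sigma}f\|_{L^2(Q)}^2$ one expands the cross term by integrating by parts in $x$ and $t$; the leading positive interior contributions are $s^3\lambda^4\iint\xi^3|A\nabla\eta\cdot\nabla\eta|^2v^2$, $s\lambda^2\iint\xi\,|A\nabla\eta\cdot\nabla v|^2$ and, after adding $\|P_1v\|^2$ back, $s\lambda^2\iint\xi\,A\nabla v\cdot\nabla v$ --- which, on undoing the substitution, are the exponentially weighted terms on the left of \eqref{CarlemanN} --- together with a family of lower-order interior remainders and boundary integrals over $\Sigma$.

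The decisive point --- and the reason the argument survives the degeneracy and non-smoothness of $\Gamma_0$ --- is that every object produced in this computation which is not a priori supported in $\hat Q$ carries a factor $\nabla\sigma=-\lambda\xi\nabla\eta$, i.e.\ a factor of $\nabla\eta$. Since $\eta\equiv0$ and $\nabla\eta\equiv0$ on $\Omega\setminus\hat\Omega$ and, by \eqref{ETA2}, $\nabla\eta\equiv0$ on the interior interface $\partial\hat\Omega\setminus\partial\Omega$, all such volume contributions vanish, and in particular every boundary integral over $\Gamma_0$ and over $\partial\hat\Omega\setminus\partial\Omega$ vanishes; thus the potentially illegitimate integrations by parts near $\Gamma_0$, where $w$ need not have a trace, never occur, and the whole manipulation takes place over $\hat Q$, whose lateral boundary $(\partial\hat\Omega\cap\partial\Omega)\times(0,T)\subset(\Gamma\setminus\Gamma_0)\times(0,T)$ is $C^2$ and on which $A$ is uniformly elliptic by Assumption \ref{assume1}. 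On $\hat Q$ the estimate becomes a standard uniformly parabolic Carleman computation, and the absorption of the interior remainders by choosing $\lambda$ and then $s$ large proceeds as in the non-degenerate theory. The part of the left-hand side genuinely living on $\Omega\setminus\hat\Omega\subset\omega_0$ is treated separately: its zeroth-order piece $s^3\lambda^4\xi^3w^2$ is absorbed directly into the observation term, while the second-order piece $s^{-1}\xi^{-1}(|\partial_tw|^2+|\Div(A\nabla w)|^2)$ is controlled there through the equation \eqref{Neumann} (so that $\Div(A\nabla w)=f-\partial_tw$) and a local parabolic energy/Caccioppoli estimate on $\omega_0$, where $A$ is still positive definite.

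The genuinely new difficulty --- and the point where the Neumann case departs from Theorem \ref{TH2} --- is the boundary integral over $(\partial\hat\Omega\cap\partial\Omega)\times(0,T)$, which does not vanish, because $\nabla\eta$ need not vanish there: one must keep $|\nabla\eta|>0$ near the non-degenerate boundary since $\omega_0$ surrounds only $\Gamma_0$. For Dirichlet data the condition $w=0$ kills the tangential derivatives of $v$ on $\Sigma$, only a term quadratic in the conormal derivative of $v$ survives, and it is discarded after arranging its sign favorably --- this is why \eqref{3.2} carries no boundary term. Under the Neumann condition $A\nabla w\cdot\nu=0$ the conormal derivative vanishes but the full tangential gradient of $w$ does not; eliminating the conormal component by means of $A\nabla w\cdot\nu=0$ leaves a boundary quadratic form in the tangential derivatives of $w$ whose sign is indefinite. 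The remedy is to carry out the entire computation a second time with the companion weight $\widetilde{\sigma}$ --- built from a companion of $\eta$ chosen so that the conormal trace it produces on $\partial\hat\Omega\cap\partial\Omega$ offsets the one coming from $\sigma$ --- and then to add the two resulting inequalities, so that the indefinite boundary contributions combine into a controllable quantity; this symmetrization is precisely why \eqref{CarlemanN} is stated with both exponential weights $e^{2s\sigma}+e^{2s\widetilde{\sigma}}$ on each side. I expect making this two-weight boundary cancellation rigorous, while keeping track of the powers of $s$ and $\lambda$, to be the main technical obstacle.

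Finally, to reach the stated form I would upgrade the leading term (writing $(\cdots)$ for the weight appearing on both sides of \eqref{CarlemanN}): on $\Omega_0$ one has $|\nabla\eta|\ge C>0$ by \eqref{ETA2}, hence $A\nabla\eta\cdot\nabla\eta\ge C>0$ by ellipticity, so $s^3\lambda^4\iint_{\hat\Omega}\xi^3|A\nabla\eta\cdot\nabla\eta|^2w^2(\cdots)$ dominates $s^3\lambda^4\iint_{\Omega_0}\xi^3w^2(\cdots)$; since $\hat\Omega\setminus\Omega_0\subset\omega_0$ (because $\Omega\setminus\omega_0\subset\Omega_0$) and $\Omega\setminus\hat\Omega\subset\omega_0$, the gap between $\iint_{\Omega_0}$ and $\iint_Q$ is bounded by the observation term $s^3\lambda^4\iint_{\omega_0}\xi^3w^2(\cdots)$. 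Any gradient-in-$w$ error over $\omega_0$ that has appeared is removed by inserting a cutoff $\zeta\in C_0^\infty(\omega_0)$, integrating by parts once more and invoking a Caccioppoli inequality to dominate $\iint_{\omega_0}\zeta\,\xi\,A\nabla w\cdot\nabla w\,(\cdots)$ by $\iint|f|^2(\cdots)$ plus $s^3\lambda^4\iint_{\omega_0}\xi^3w^2(\cdots)$, and likewise for the $\widetilde{\sigma}$-part. This yields the right-hand side of \eqref{CarlemanN}; the Dirichlet estimate (Theorem \ref{TH2}) is the same argument with the lateral boundary integral simply dropped.
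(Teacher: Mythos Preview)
Your proposal is correct and follows essentially the paper's approach: the same conjugation--splitting computation, the same exploitation of $\nabla\eta\equiv 0$ outside $\hat\Omega$ to localize all integrations by parts to the non-degenerate subcylinder, the same two-weight symmetrization to dispose of the Neumann boundary terms, and the same cutoff argument to eliminate the gradient term over $\omega_0$. The only point you leave vague is the form of the companion weight: in the paper $\widetilde\sigma$ is obtained simply by replacing $\eta$ with $-\eta$ in the definition of $\sigma$, which forces $\sigma=\widetilde\sigma$ but $\nabla\sigma=-\nabla\widetilde\sigma$ on $\partial\hat\Omega$ (since $\eta=0$ there), so that the boundary terms from the $u$- and $\widetilde u$-computations cancel \emph{pairwise and exactly} rather than merely combining into something controllable.
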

	By a standard argument, we can conclude the following theorem. 
	\begin{theorem}\label{TH4}
		For a fixed $T>0$ and an open set $\omega_{0} \subset \Omega$ as defined previously, assuming that \eqref{3.2} and \eqref{CarlemanN} holds, there exists a positive constant $C>0$ such that for any $w_T \in L^2(\Omega)$, the solution to \eqref{3.1} satisfies
		\begin{equation}\label{3.4}
			\int_{\Omega}|w(x, 0)|^2 d x  \leq C \iint_{\omega_{0} \times(0, T)}|w|^2 d x d t.
		\end{equation}
	\end{theorem}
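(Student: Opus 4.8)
The plan is to derive the observability inequality \eqref{3.4} from the Carleman estimates \eqref{3.2} (Dirichlet case) and \eqref{CarlemanN} (Neumann case) by the standard two-step route: first a Carleman-to-observability reduction on a fixed time subinterval, then a dissipativity/energy argument to propagate the bound back to $t=0$. I would treat the Dirichlet case in detail and remark that the Neumann case is identical once one uses \eqref{CarlemanN} in place of \eqref{3.2}. Throughout, $w$ solves \eqref{Dirichlet} with $f=0$ (the homogeneous adjoint equation), so the term $\|e^{-s\sigma}f\|^2$ on the right-hand side of \eqref{3.2} vanishes.

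\textbf{Step 1: localize in time.} Fix $s=s_0$, $\lambda=\lambda_0$ as provided by Theorem \ref{TH2}, so all constants become absolute. On the subinterval $[T/4,3T/4]$ the weight functions $\theta(t)=[t(T-t)]^{-4}$ and $\xi(x,t)=\theta(t)e^{\lambda(8|\eta|_\infty+\eta(x))}$ are bounded above and below by positive constants depending only on $T$, $\lambda_0$ and $\eta$. Hence from \eqref{3.2}, after dropping the first two nonnegative terms on the left and keeping only $C\iint_Q s^3\lambda^4\xi^3|A\nabla\eta\cdot\nabla\eta|^2|w|^2$, and noting that $|A\nabla\eta\cdot\nabla\eta|$ is bounded below by a positive constant on $\Omega_0$ by \eqref{ETA2} together with the uniform ellipticity of $A$ on $\hat\Omega\supset\Omega_0$, I get
\begin{equation*}
\int_{T/4}^{3T/4}\!\int_{\Omega_0}|w|^2\,dx\,dt \;\le\; C\int_0^T\!\int_{\omega_0}|w|^2\,dx\,dt.
\end{equation*}
Since $\Omega\setminus\omega_0\subset\Omega_0$ by construction, $\Omega=\Omega_0\cup\omega_0$, and therefore $\int_{T/4}^{3T/4}\!\int_\Omega|w|^2\,dx\,dt\le C\iint_{\omega_0\times(0,T)}|w|^2\,dx\,dt$.

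\textbf{Step 2: energy estimate to reach $t=0$.} Multiply the equation $\partial_t w+\Div(A\nabla w)=0$ by $w$ and integrate over $\Omega$; using the Dirichlet condition $w|_\Sigma=0$ the boundary term in the integration by parts vanishes (this is legitimate because $w$ has the assumed extra regularity $w\in H^1(0,T;\mathcal H_0^1(\Omega))$ and $\Div(A\nabla w)\in L^2$), giving $\tfrac12\tfrac{d}{dt}\|w(t)\|_{L^2(\Omega)}^2=\int_\Omega A\nabla w\cdot\nabla w\,dx\ge 0$. Thus $t\mapsto\|w(t)\|_{L^2(\Omega)}^2$ is nondecreasing, so $\|w(0)\|_{L^2(\Omega)}^2\le\|w(t)\|_{L^2(\Omega)}^2$ for every $t\in[0,T]$; averaging over $t\in[T/4,3T/4]$ yields $\|w(0)\|_{L^2(\Omega)}^2\le\tfrac{2}{T}\int_{T/4}^{3T/4}\|w(t)\|_{L^2(\Omega)}^2\,dt$. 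Combining with Step 1 gives \eqref{3.4}. In the Neumann case the same computation applies: $A\nabla w\cdot\nu|_\Sigma=0$ again kills the boundary term, monotonicity holds verbatim, and \eqref{CarlemanN} (with $f=0$, after restricting the weight $e^{2s\sigma}+e^{2s\tilde\sigma}$ to $[T/4,3T/4]$ where it is bounded two-sidedly, and using $s\lambda^2\xi A\nabla w\cdot\nabla w\ge 0$ plus the lower bound on $|A\nabla w\cdot\nabla\eta|$ where needed) delivers the same localized estimate.

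\textbf{Main obstacle.} The delicate point is Step 1: one must be sure that the left-hand side of the Carleman estimate genuinely dominates $\int\!\int_{\Omega_0}|w|^2$ rather than merely $\int\!\int_{\hat\Omega}|w|^2$ — but this is exactly what \eqref{ETA2} is engineered for, since $\|\nabla\eta\|\ge C>0$ on $\Omega_0$ forces $|A\nabla\eta\cdot\nabla\eta|\ge\beta|\nabla\eta|^2\ge C'>0$ there by ellipticity on $\hat\Omega$. A secondary subtlety is that the region $\omega_0$ where the observation lives must cover $\Omega\setminus\Omega_0$; this is guaranteed by the stated inclusion $\Omega\setminus\omega_0\subset\Omega_0$. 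No integration by parts over the degenerate or non-$C^2$ portion of $\partial\Omega$ is ever invoked, since the energy identity only uses the boundary condition, and the Carleman estimate is taken as given; thus the degeneracy causes no trouble at this final stage — it has already been handled inside the proofs of Theorems \ref{TH2} and \ref{TH22}.
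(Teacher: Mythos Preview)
Your proof is correct and follows essentially the same route as the paper: both combine the monotonicity of $t\mapsto\|w(t)\|_{L^2(\Omega)}^2$ (obtained by multiplying the homogeneous adjoint equation by $w$ and integrating) with the Carleman estimate restricted to $[T/4,3T/4]$ where the weights are two-sidedly bounded. The only cosmetic difference is that the paper treats the Neumann case in detail (where \eqref{CarlemanN} already carries the clean term $s^3\lambda^4\xi^3 w^2$ on the left), while you treat the Dirichlet case and therefore correctly insert the extra step of using $|A\nabla\eta\cdot\nabla\eta|\ge C$ on $\Omega_0$ together with $\Omega=\Omega_0\cup\omega_0$ to pass from the weighted term in \eqref{3.2} to a full $\int_\Omega|w|^2$; this is exactly the maneuver the authors perform inside the proof of Theorem~\ref{TH2} (just before stating the final Carleman inequality for $w$), so your argument and theirs coincide.
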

	The proofs of Theorems \ref{TH2}-\ref{TH4} will be provided in Chapter 4.  Since  proving controllability is equivalent to establishing an observability property for the adjoint system \eqref{3.1} (see \cite{lions1992remarks,rockafellar1967duality})   we can now  use  the duality between controllability and   observability to obtain  easily the null controllability of \eqref{1.1}.
	\begin{theorem}\label{TH3}
		For a fixed $T>0$ and an open set $\omega_{0} \subset \Omega$ as defined previously, assuming that \eqref{3.2} holds, there exists a control $g \in L^2(Q)$ such that the solution $z$ of \eqref{1.1} satisfies
		$$
		z(\cdot, T)=0 \mbox{ in } \Omega.
		$$
	\end{theorem}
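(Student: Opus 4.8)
The plan is to deduce null controllability of \eqref{1.1} from the observability inequality \eqref{3.4} for the adjoint system \eqref{3.1} by the classical Hilbert Uniqueness Method (HUM), which is by now standard once the observability estimate is in hand. First I would fix $z_0\in L^2(\Omega)$ and $T>0$, and for $\varepsilon>0$ introduce the regularized functional
\begin{equation*}
J_\varepsilon(w_T)=\frac12\iint_{\omega_0\times(0,T)}|w|^2\,dx\,dt+\varepsilon\|w_T\|_{L^2(\Omega)}+\int_\Omega z_0\,w(\cdot,0)\,dx,
\end{equation*}
where $w$ is the solution of \eqref{3.1} with $f\equiv0$ and terminal datum $w_T$. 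By Theorem \ref{exist1} (applied to the backward equation \eqref{3.1}, which is well-posed after the time reversal $t\mapsto T-t$), $w_T\mapsto w$ is linear continuous from $L^2(\Omega)$ into $C^0([0,T];L^2(\Omega))$, so $J_\varepsilon$ is continuous, strictly convex, and one checks it is coercive on $L^2(\Omega)$ precisely because of the observability inequality \eqref{3.4}: along any sequence with $\|w_T^{(k)}\|\to\infty$, either the observation term $\iint_{\omega_0\times(0,T)}|w^{(k)}|^2$ stays bounded below by a positive multiple of $\|w^{(k)}(\cdot,0)\|^2$ and dominates, or it blows up; a short argument normalizing $w_T^{(k)}/\|w_T^{(k)}\|$ gives $\liminf J_\varepsilon(w_T^{(k)})/\|w_T^{(k)}\|\ge\varepsilon>0$. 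Hence $J_\varepsilon$ admits a unique minimizer $\widehat{w}_T^{\,\varepsilon}$.

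Next I would write the Euler--Lagrange equation at the minimizer. Denoting by $\widehat{w}^{\,\varepsilon}$ the corresponding adjoint solution, the first-order optimality condition reads, for all $v_T\in L^2(\Omega)$,
\begin{equation*}
\iint_{\omega_0\times(0,T)}\widehat{w}^{\,\varepsilon}\,v\,dx\,dt+\int_\Omega z_0\,v(\cdot,0)\,dx+\varepsilon\,\langle\xi^\varepsilon,v_T\rangle=0,\qquad\|\xi^\varepsilon\|_{L^2(\Omega)}\le1,
\end{equation*}
where $v$ solves \eqref{3.1} with terminal datum $v_T$ and $f\equiv0$. Setting $g_\varepsilon:=\chi_{\omega_0}\widehat{w}^{\,\varepsilon}$ and letting $z_\varepsilon$ be the solution of \eqref{1.1} with this control, the duality identity $\int_\Omega z_\varepsilon(\cdot,T)v_T\,dx-\int_\Omega z_0 v(\cdot,0)\,dx=\iint_Q \chi_{\omega_0}g_\varepsilon\,v\,dx\,dt$ (obtained by multiplying \eqref{1.1} by $v$, \eqref{3.1} by $z_\varepsilon$, subtracting and integrating by parts over $Q$ — legitimate here since all integrations stay inside the non-degenerate region $\omega_0$ and use only the weak formulations in $\mathcal H^1_0$ or $\mathcal H^1$) combines with the Euler--Lagrange equation to give $\int_\Omega z_\varepsilon(\cdot,T)v_T\,dx=-\varepsilon\langle\xi^\varepsilon,v_T\rangle$, hence $\|z_\varepsilon(\cdot,T)\|_{L^2(\Omega)}\le\varepsilon$. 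Finally, evaluating $J_\varepsilon(\widehat{w}_T^{\,\varepsilon})\le J_\varepsilon(0)=0$ and using Young's inequality together with \eqref{3.4} yields a bound $\|g_\varepsilon\|_{L^2(\omega_0\times(0,T))}\le C\|z_0\|_{L^2(\Omega)}$ independent of $\varepsilon$; extracting a weakly convergent subsequence $g_\varepsilon\rightharpoonup g$ in $L^2(Q)$ and passing to the limit in the linear equation \eqref{1.1} (using continuous dependence from Theorem \ref{exist1}) produces a control $g$ with $z(\cdot,T)=0$.

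The genuinely delicate point, and the one I would treat most carefully, is the duality pairing between \eqref{1.1} and \eqref{3.1}: because the coefficient matrix $A$ degenerates on $\Gamma_2\subset\partial\Omega$ and the boundary is only Lipschitz on $\Gamma_1$, the integration by parts $\iint_Q\bigl(\Div(A\nabla z)\,v-z\,\Div(A\nabla v)\bigr)=\int_\Sigma\bigl(A\nabla z\cdot\nu\, v-z\,A\nabla v\cdot\nu\bigr)$ is not a priori meaningful near $\Gamma_0$. The way around this is exactly the mechanism already built into the weight: all the controls constructed above are supported in $\omega_0$, which contains the neighborhood $\{d(x,\Gamma_0)<\rho\}$ of the bad boundary, so one only ever needs the duality identity tested against functions whose relevant interactions occur on $\widehat\Omega$, where $A$ is uniformly elliptic and $\partial\widehat\Omega$ is $C^2$; there the integration by parts is classical and the boundary terms vanish thanks to the homogeneous Dirichlet (resp. Neumann) condition on the part of $\partial\widehat\Omega$ lying on $\partial\Omega$ and the fact that on the artificial interior part $\partial\widehat\Omega\setminus\partial\Omega$ the test functions $z_\varepsilon,\widehat w^{\,\varepsilon}$ are genuine $\mathcal H^1$ functions matched across the interface. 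Once this identity is justified, everything else is the routine HUM machinery, so I expect the write-up to be short modulo a careful statement of the admissible duality.
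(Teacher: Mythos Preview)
Your proposal is correct and is exactly the standard HUM/duality argument the paper invokes: in the paper Theorem~\ref{TH3} is not proved in detail at all---it is simply stated that null controllability follows from the observability inequality~\eqref{3.4} by the well-known duality between controllability and observability (with a citation to Lions and Rockafellar), and your write-up supplies precisely that routine argument.

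One simplification worth noting: your careful discussion of the ``delicate'' duality pairing near the degenerate/nonsmooth boundary $\Gamma_0$ is more work than necessary. The paper has already established (Lemma~\ref{2.15}) that the generators $(\mathcal A_i,D(\mathcal A_i))$ are self-adjoint and dissipative on $L^2(\Omega)$, so the semigroups $e^{t\mathcal A_i}$ are self-adjoint. The identity $\int_\Omega z_\varepsilon(\cdot,T)v_T\,dx-\int_\Omega z_0\,v(\cdot,0)\,dx=\iint_Q \chi_{\omega_0}g_\varepsilon\,v\,dx\,dt$ then follows directly from the mild-solution (Duhamel) representation and self-adjointness of the semigroup, with no need to justify integration by parts on $\partial\Omega$ or to localize to $\hat\Omega$. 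This is the cleaner route and is implicitly what the paper's abstract well-posedness framework is set up to provide.
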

	
	\subsection{Specific examples} 
	\hspace*{\fill}\\
	
	In order to better illustrate the results of this paper, we provide several specific examples of applications. 
	
	\noindent {\bf Example 1}.   First, we consider a specific example of a two-dimensional degenerate parabolic equation:
	\begin{equation}\label{ex1}
		\begin{cases}
			\partial_{t}z - (y^{\alpha_2}\partial_{xx}z +x^{\alpha_1}\partial_{yy}z )=\chi_{\omega_{0}}g, & \mbox{in} \ Q,  \\
			z(x,y,t)=0 \ \mbox{or} \ A\nabla z \cdot \nu =0, & \mbox{on} \ \Sigma,  \\
			z(x,  y,  0)= z_{0}(x,  y),  & \mbox{in} \ \Omega,  
		\end{cases}
	\end{equation}
	where $\alpha_1$, $\alpha_2 \in (0,2)$,   $\Omega=(0,1)\times(0,1)$, $\Gamma:= \partial\Omega$, $T>0$, $Q:=\Omega\times (0,T)$, $\Sigma:= \Gamma\times(0,T)$. It is easy to see that the degenerate boundary and non-$C^2$ boundary consist  of  $\Gamma_0=\{0\}\times(0,1)\cup (0,1)\times\{0\}\cup (1,1)$,  the vertex $(1,1)$ at the top right corner of $\Omega$.  Let $\omega_{0} = \left\lbrace x\in \Omega \ | \  d(x,\Gamma_0)< \rho\right\rbrace$.
	The control $g \in L^{2}(Q)$ and $z_0 \in L^{2}(\Omega)$ are given. The selections of $\hat{\Om}$ and $\Om_0$ are    as before.
	
	\begin{figure} 
		\begin{center}
			{  \includegraphics[width=0.8\textwidth]{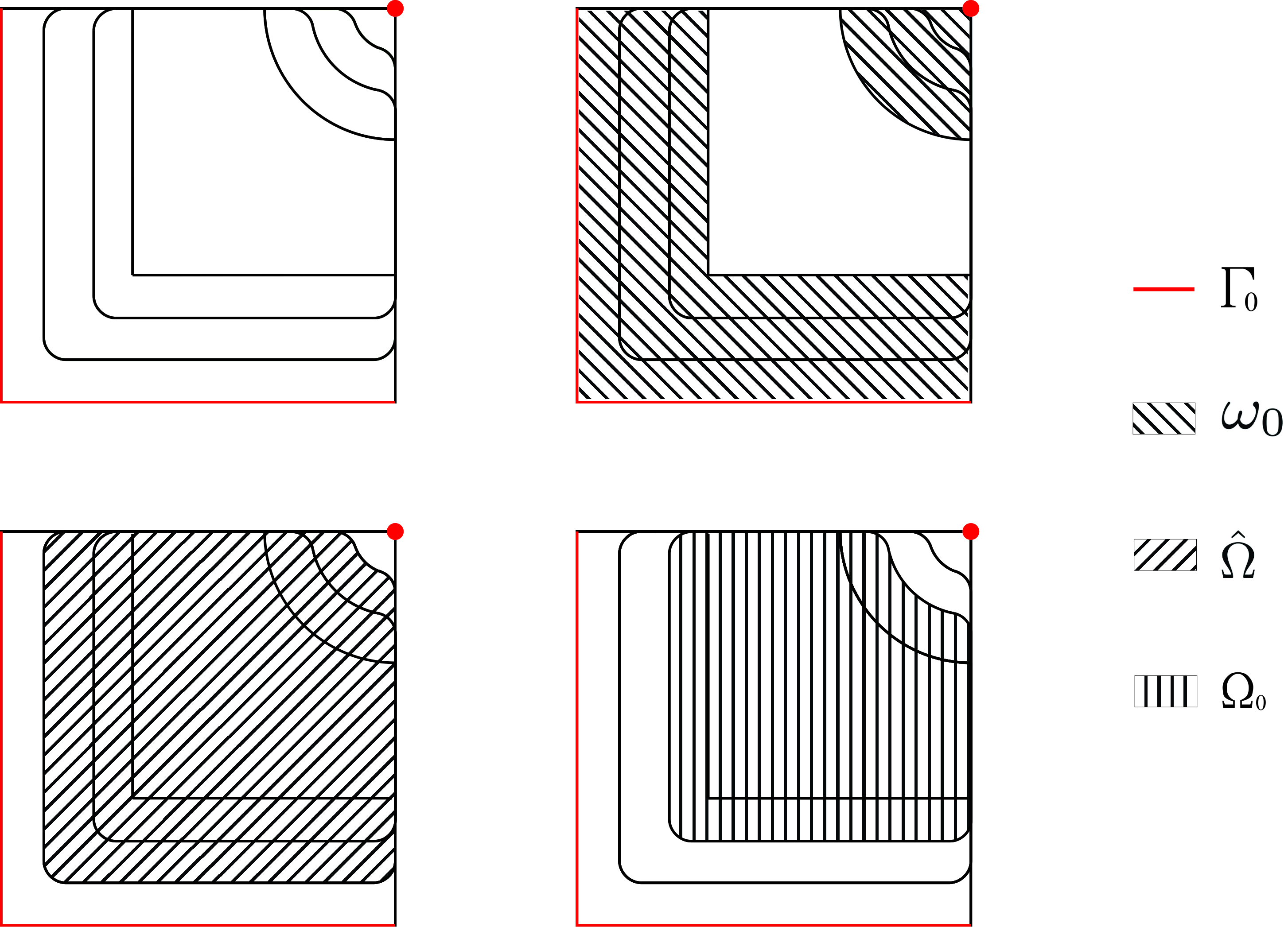}  }
			\renewcommand\figurename{Fig.}
			\caption{Illustration of the sets $\omega_{0}$ and $\hat{\Om}$, and $\Om_0 $ in specific case.
			}\label{fig.2}
		\end{center}
	\end{figure} 
	The matrix-valued function $A:\overline{\Omega} \to M_{2\times 2}(\mathbb{R})$ is given by
	\begin{equation*}
		\begin{pmatrix} y^{\alpha_2} & 0 \\ 0 &x^{\alpha_1} \end{pmatrix}.
	\end{equation*} 
	It is easy to verify Assumption \ref{assume1} and we can then  obtain the controllability of the equation \eqref{ex1}.
	\begin{proposition}
		For a fixed $T>0$ and an open set $\omega_{0} \subset \Omega$ as defined, there exists a control $g \in L^2(Q)$ such that the solution $z$ of \eqref{ex1} satisfies
		$$
		z(\cdot, T)=0 \mbox{ in } \Omega.
		$$
	\end{proposition}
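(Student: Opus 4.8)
The plan is to obtain this proposition as a direct application of the general theory of Section 2.1: I would verify that the data of \eqref{ex1} satisfy Assumption \ref{assume1} together with the geometric constraints relating $\omega_0$, $\hat{\Omega}$ and $\Omega_0$, and then invoke Theorem \ref{TH3} in the Dirichlet case (and Theorems \ref{TH22}--\ref{TH4} together with the controllability/observability duality in the Neumann case).

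First I would pin down the degenerate and non-$C^2$ boundary. The eigenvalues of $A(x,y)=\mathrm{diag}(y^{\alpha_2},x^{\alpha_1})$ are $y^{\alpha_2}$ and $x^{\alpha_1}$; since $\alpha_1,\alpha_2\in(0,2)$ they are strictly positive on $\Omega=(0,1)\times(0,1)$, but $A$ loses uniform ellipticity, and in fact vanishes, along the edges $\{0\}\times(0,1)$ and $(0,1)\times\{0\}$. The boundary $\Gamma=\partial\Omega$ is Lipschitz but fails to be $C^2$ at the four corners; the three corners $(0,0),(0,1),(1,0)$ already sit on the degenerate edges, so only $(1,1)$ must be adjoined, which gives $\Gamma_0=\{0\}\times[0,1]\cup[0,1]\times\{0\}\cup\{(1,1)\}$ as asserted. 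Because $\Gamma_0$ is a finite union of segments and a point, its tubular neighborhoods are harmless and one can pick $\hat{\Omega}$ with $C^2$ boundary that contains portions of $\{x=1\}$ and $\{y=1\}$ away from $(1,1)$ but no degenerate boundary, so that the nested inclusions $\Omega\setminus\bigcup_{x_0\in\Gamma_0}B(x_0,\tfrac{2\rho}{3})\subset\hat{\Omega}\subset\Omega\setminus\bigcup_{x_0\in\Gamma_0}B(x_0,\tfrac{\rho}{3})$ and $\Omega\setminus\omega_0\subset\Omega_0\subset\Omega\setminus\bigcup_{x_0\in\Gamma_0}B(x_0,\tfrac{2\rho}{3})$ hold with $\omega_0=\{x\in\Omega: d(x,\Gamma_0)<\rho\}$ (see also Figure \ref{fig.2}).

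Next I would check Assumption \ref{assume1}. The matrix $A$ is symmetric, diagonal with strictly positive entries on $\Omega$, hence positive definite there, and since $x,y>0$ the maps $x\mapsto x^{\alpha_1}$, $y\mapsto y^{\alpha_2}$ are $C^\infty$, so $A\in C^2(\Omega)$. On $\overline{\Omega}$ we have $0\le x,y\le1$, giving $x^{\alpha_1},y^{\alpha_2}\le1$; and on $\hat{\Omega}$ every point $(x,y)$ has $d((x,y),\Gamma_0)\ge\rho/3$, which forces $x\ge\rho/3$ and $y\ge\rho/3$ (these being the distances to the left and bottom edges), hence $x^{\alpha_1}\ge(\rho/3)^{\alpha_1}$ and $y^{\alpha_2}\ge(\rho/3)^{\alpha_2}$. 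So the uniform ellipticity estimate holds on $\hat{\Omega}$ with $\beta:=\min\{(\rho/3)^{\alpha_1},(\rho/3)^{\alpha_2}\}$ and $\Lambda:=1$, while $A=0$ on $\{x=0\}\cup\{y=0\}\subset\partial\Omega$ as allowed, so Assumption \ref{assume1} is met.

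Finally, with Assumption \ref{assume1} and the geometry in place, I would choose a weight $\eta$ satisfying \eqref{ETA1}--\eqref{ETA2} (such an $\eta$ exists here because $\Gamma_0$ is a finite union of segments and a point and $\Omega_0$ lies compactly inside $\hat{\Omega}$), apply Theorem \ref{TH2} to obtain the Carleman estimate \eqref{3.2} for the adjoint problem \eqref{Dirichlet} of \eqref{ex1}, and then quote Theorem \ref{TH3} to produce the control $g\in L^2(Q)$ with $z(\cdot,T)=0$; in the Neumann case, \eqref{CarlemanN} of Theorem \ref{TH22}, the observability inequality \eqref{3.4} of Theorem \ref{TH4}, and duality do the same job. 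The only step with any content is the geometric bookkeeping of the first step --- simultaneously meeting the $C^2$-boundary requirement on $\hat{\Omega}$ and $\Omega_0$ and all the inclusions for a square that degenerates on two adjacent edges plus the opposite corner --- and this is routine, so I do not expect any real obstacle beyond correctly invoking the general theorems.
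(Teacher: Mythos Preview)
Your proposal is correct and follows exactly the paper's approach: the paper simply remarks that Assumption~\ref{assume1} is easy to verify for this $A$ and then reads off the proposition from the general Theorems~\ref{TH2}--\ref{TH3} (resp.\ \ref{TH22}--\ref{TH4}). One cosmetic point: $A=\mathrm{diag}(y^{\alpha_2},x^{\alpha_1})$ does not vanish identically on $\{x=0\}$ or $\{y=0\}$ (only one eigenvalue does, except at the corner), but what matters for Assumption~\ref{assume1} is that uniform ellipticity fails there while it holds on $\hat\Omega$, and your explicit constants $\beta=\min\{(\rho/3)^{\alpha_1},(\rho/3)^{\alpha_2}\}$, $\Lambda=1$ establish exactly that.
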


	\noindent{\bf Example 2}. 	Similarly,we can also consider 
	\begin{equation}\label{ex2}
		\begin{cases}
			\partial_{t}z - \Div(A\nabla z)=\chi_{\omega_{0}}g, & \mbox{in} \ Q,  \\
			z(x,y,t)=0 \ \mbox{or} \ A\nabla z \cdot \nu =0, & \mbox{on} \ \Sigma,  \\
			z(x,  y,  0)= z_{0}(x,  y),  & \mbox{in} \ \Omega,  
		\end{cases}
	\end{equation}
	All other conditions remain unchanged as for equation \eqref{ex1}, except now 
	\begin{equation*}
		A=	\begin{pmatrix} x^{\alpha_1} & 0 \\ 0 & y^{\alpha_2} \end{pmatrix}:  \overline{\Omega} \to M_{2\times 2}(\mathbb{R})\, .
	\end{equation*}
	This equation  is  studied in \cite{BSV}, but our method is different  and applicable to a wider range.  In this case, $\Gamma_0$ remains as $\{0\}\times(0,1)\cup (0,1)\times\{0\}\cup (1,1)$, and we  can also easily establish Carleman estimates to get null controllability.
	\begin{proposition} 
		For a fixed $T>0$ and an open set $\omega_{0} \subset \Omega$ as defined previously, there exists a control $g \in L^2(Q)$ such that the solution $z$ of \eqref{ex2} satisfies
		$$
		z(\cdot, T)=0 \mbox{ in } \Omega.
		$$
	\end{proposition}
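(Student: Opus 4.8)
The plan is to show that equation \eqref{ex2}, with $A(x,y)=\diag(x^{\alpha_1},y^{\alpha_2})$ and the indicated sets, is a special instance of the general system \eqref{1.1}, so that null controllability is nothing but Theorem \ref{TH3} (Dirichlet boundary condition) together with the obvious Neumann analogue built on Theorem \ref{TH22}. Concretely I would proceed in three steps: first verify Assumption \ref{assume1} and the geometric requirements on $\omega_0,\hat{\Om},\Om_0$; then exhibit an admissible weight $\eta$ fulfilling \eqref{ETA1}--\eqref{ETA2}; and finally chain together Theorem \ref{TH2}/Theorem \ref{TH22} (Carleman), Theorem \ref{TH4} (observability), and Theorem \ref{TH3} (duality) to produce the control $g\in L^2(Q)$.

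For the first step, $A$ is symmetric and measurable; since $\alpha_1,\alpha_2\in(0,2)$, the functions $(x,y)\mapsto x^{\alpha_1}$ and $(x,y)\mapsto y^{\alpha_2}$ are $C^\infty$ on the open square $\Om=(0,1)^2$, so $A\in C^2(\Om)$, and $x^{\alpha_1},y^{\alpha_2}>0$ there makes $A$ positive definite on $\Om$. The matrix loses uniform ellipticity exactly on the two sides $\{0\}\times[0,1]$ and $[0,1]\times\{0\}$, while the remaining non-$C^2$ point is the corner $(1,1)$; together this is the set $\Gamma_0=\{0\}\times(0,1)\cup(0,1)\times\{0\}\cup\{(1,1)\}$ declared in the statement. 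With $\omega_0=\{(x,y)\in\Om:d((x,y),\Gamma_0)<\rho\}$, every admissible $\hat{\Om}$, sandwiched between $\Om\setminus\bigcup_{x_0\in\Gamma_0}B(x_0,\tfrac{2\rho}{3})$ and $\Om\setminus\bigcup_{x_0\in\Gamma_0}B(x_0,\tfrac{\rho}{3})$, is separated from $\Gamma_0$, hence from the axes $\{x=0\}$ and $\{y=0\}$; so there is $c=c(\rho)>0$ with $x\ge c$, $y\ge c$ on $\hat{\Om}$, and then
\begin{equation*}
\min\{c^{\alpha_1},c^{\alpha_2}\}\,|\zeta|^2 \;\le\; x^{\alpha_1}\zeta_1^2+y^{\alpha_2}\zeta_2^2 \;\le\; |\zeta|^2,\qquad \forall\,\zeta\in\mathbb R^2,\ \forall\,(x,y)\in\hat{\Om},
\end{equation*}
so Assumption \ref{assume1} holds with $\beta=\min\{c^{\alpha_1},c^{\alpha_2}\}$ and $\Lambda=1$. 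Since the removed balls around $\Gamma_0$ also cover the other three corners of the square, $\hat{\Om}$ and $\Om_0$ can indeed be chosen with $C^2$ boundary, as required.

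For the second step, $\hat{\Om}$ is a bounded open set with $C^2$ boundary and $\overline{\Om_0}\subset\hat{\Om}$, so the weight $\eta$ is obtained exactly as in the proof of Theorems \ref{TH2}--\ref{TH22}: one takes a $C^3$ function that is positive on $\hat{\Om}$, vanishes to order three on $\Om\setminus\hat{\Om}$, has $\nabla\eta=0$ on the interior part $\partial\hat{\Om}\setminus(\partial\hat{\Om}\cap\partial\Om)$, and $|\nabla\eta|$ bounded below on the compact set $\overline{\Om_0}$ — for instance a mollified, suitably flattened power of $d(\cdot,\partial\hat{\Om})$. Then $\theta,\xi,\sigma$ are defined by \eqref{e.2.7}, Theorem \ref{TH2} (Dirichlet) resp.\ Theorem \ref{TH22} (Neumann) delivers the Carleman estimate \eqref{3.2} resp.\ \eqref{CarlemanN} for the adjoint equation \eqref{3.1}, Theorem \ref{TH4} converts it into the observability inequality \eqref{3.4}, and the controllability/observability duality of Theorem \ref{TH3} (and its verbatim Neumann counterpart) yields a control $g\in L^2(Q)$ steering the solution of \eqref{ex2} to $z(\cdot,T)=0$ in $\Om$.

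The one point requiring care — and essentially the only place where anything genuinely has to be checked — is the geometric bookkeeping of the first step: one must ensure that the fixed $\rho$-neighborhood $\omega_0$ engulfs the entire degenerate boundary together with the bad vertex $(1,1)$, so that a $C^2$ intermediate domain $\hat{\Om}$ on which $A$ is uniformly elliptic can be inserted and an $\eta$ as in \eqref{ETA1}--\eqref{ETA2} exists; everything afterwards is a direct invocation of the general theorems, the argument being identical to that for \eqref{ex1}.
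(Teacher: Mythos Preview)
Your proposal is correct and follows precisely the approach the paper takes: the proposition is stated as an immediate application of the general framework, and the paper itself merely remarks that $\Gamma_0$ is as indicated and that one ``can also easily establish Carleman estimates to get null controllability,'' i.e., verify Assumption~\ref{assume1} and invoke Theorems~\ref{TH2}--\ref{TH3}. Your write-up supplies the routine verifications (smoothness and positivity of $A$ on $\Om$, uniform ellipticity on $\hat\Om$ via the lower bound $x,y\ge c(\rho)$, coverage of the corner points by the $\rho$-neighborhood) that the paper leaves implicit, but the logical structure is identical.
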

	It is worth noting that when we replace the above domain $\Omega$ with $\{(x,y)|x^2+y^2\le 1,x>0,y>0\}$ in the above two cases  \eqref{ex1} and \eqref{ex2}, at this time the degenerate boundary $\Gamma_0$ becomes  only  $\{0\}\times(0,1)\cup (0,1)\times\{0\}$,  making thing even  simpler. 
	
	\noindent{\bf Example 3}.  The above two examples are two dimensional equations. Now we give a four dimensional example as follows: 
	\begin{equation}\label{ex3}
		\begin{cases}
			\partial_{t}z - \Div(A\nabla z)=\chi_{\omega_{0}}g, & \mbox{in} \ Q,  \\
			z(x,y,z,w,t)=0 \ \mbox{or} \ A\nabla z \cdot \nu =0, & \mbox{on} \ \Sigma,  \\
			z(x,  y,z,w,  0)= z_{0}(x,  y,z,w,),  & \mbox{in} \ \Omega,  
		\end{cases}
	\end{equation}
	where   $\Omega=(0,1)\times(0,1)\times(0,1)\times(0,1)$, $\Gamma:= \partial\Omega$, $T>0$, $Q:=\Omega\times (0,T)$, $\Sigma:= \Gamma\times(0,T)$. It is easy to see that the degenerate boundary and non-$C^2$ boundary are  $\Gamma_0=\{0\}\times(0,1)\times(0,1)\times(0,1)\cup (0,1)\times\{0\}\times(0,1)\times(0,1)\cup(0,1)\times(0,1)\times\{0\}\times(0,1)\cup (0,1)\times(0,1)\times(0,1)\times\{0\}\cup (1,1,1,1)$. Let $\omega_{0} = \left\lbrace x\in \Omega \ | \  d(x,\Gamma_0)< \rho\right\rbrace$,  and $\chi_{\omega}$ be the corresponding characteristic function. The control $g \in L^{2}(Q)$ and $z_0 \in L^{2}(\Omega)$ are given. The selection of $\hat{\Om}$ and $\Om_0$ are as before 
	and the matrix-valued function $A:\overline{\Omega} \to M_{4\times 4}(\mathbb{R})$ is given by
	\begin{equation*}
		\begin{pmatrix} y^{\alpha_2} & 0 &0 &0 \\ 0 & x^{\alpha_1} &0 &0 \\ 0 & 0 & z^{\alpha_3} &0 \\ 0 & 0 & 0 & w^{\alpha_4}  \end{pmatrix}\,,
	\end{equation*}
	where $\alpha_1$, $\alpha_2$, $\alpha_3$ and $\alpha_4$  $\in (0,2)$.  	It is easy to verify the Assumption 2.1 in this case and  Theorem 
	\ref{TH3}
	can  also be applied. 
	\begin{proposition}
		For a fixed $T>0$ and an open set $\omega_{0} \subset \Omega$ as defined previously, there exists a control $g \in L^2(Q)$ such that the solution $z$ of \eqref{ex3} satisfies
		$$
		z(\cdot, T)=0 \mbox{ in } \Omega.
		$$
	\end{proposition}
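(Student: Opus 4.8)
The plan is to obtain this proposition as an immediate corollary of Theorem~\ref{TH3}: once the data of \eqref{ex3} are shown to fit the framework of Section~2.1 --- that is, once Assumption~\ref{assume1} and the geometric conditions on $\omega_0$, $\hat\Om$, $\Om_0$ are verified --- the Carleman estimate \eqref{3.2} follows from Theorem~\ref{TH2}, the observability inequality \eqref{3.4} from Theorem~\ref{TH4}, and the null controllability from Theorem~\ref{TH3}, with no further analysis required. I would therefore write the proof purely as a verification.

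Writing a generic point of $\Om=(0,1)^4$ as $(x,y,z,w)$, so that $A=\diag(y^{\alpha_2},x^{\alpha_1},z^{\alpha_3},w^{\alpha_4})$, I would first record the structural facts: $A$ is symmetric at every point; since each $\alpha_i\in(0,2)$ each diagonal entry is a $C^\infty$ function of a single coordinate on $(0,1)$, so $A\in C^2(\Om)$; and on $\Om$ all four entries are strictly positive, so $A$ is positive definite there. The ellipticity degenerates exactly on the four faces $\{x=0\}$, $\{y=0\}$, $\{z=0\}$, $\{w=0\}$ (intersected with $\overline\Om$), where the matching entry vanishes; adjoining the vertex $(1,1,1,1)$, the only non-$C^2$ boundary point of the cube not already contained in those faces, reproduces precisely the set $\Gamma_0$ of the statement. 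The choice $\omega_0=\{x\in\Om:d(x,\Gamma_0)<\rho\}$ trivially contains the prescribed $\rho$-neighborhood of $\Gamma_0$, and $C^2$ sets $\hat\Om$, $\Om_0$ satisfying the sandwich inclusions of Section~2.1 exist because the inner and outer regions in those inclusions are separated by a positive distance, so a smooth hypersurface can be slipped between them.

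It remains to verify uniform ellipticity of $A$ on $\hat\Om$. If $p=(x,y,z,w)\in\hat\Om$, then from $\hat\Om\subset\Om\setminus\bigcup_{x_0\in\Gamma_0}B(x_0,\rho/3)$ we get $d(p,\Gamma_0)\ge\rho/3$; since the face $\{x=0\}\cap\overline\Om$ lies in $\Gamma_0$ and the nearest point on it to $p\in\Om$ is obtained by setting the first coordinate to $0$, the distance from $p$ to that face equals $x$, whence $x\ge d(p,\Gamma_0)\ge\rho/3$, and likewise $y,z,w\ge\rho/3$. Using that $t\mapsto t^{\alpha_i}$ is nondecreasing on $(0,1]$ and that $\rho/3<1$, this yields, for every $\xi\in\RR^4$ and every point of $\hat\Om$,
\[
\Big(\tfrac{\rho}{3}\Big)^{\max_i\alpha_i}|\xi|^2\;\le\;y^{\alpha_2}\xi_1^2+x^{\alpha_1}\xi_2^2+z^{\alpha_3}\xi_3^2+w^{\alpha_4}\xi_4^2\;\le\;|\xi|^2,
\]
which is exactly the uniform ellipticity bound of Assumption~\ref{assume1} with $\beta=(\rho/3)^{\max_i\alpha_i}$ and $\Lambda=1$. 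All hypotheses being in force, Theorem~\ref{TH3} (and, in the Neumann case, the parallel duality argument based on \eqref{CarlemanN} and Theorem~\ref{TH4}) furnishes a control $g\in L^2(Q)$ driving the solution of \eqref{ex3} to $z(\cdot,T)=0$.

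I do not expect a genuine obstacle: the substantive difficulty, namely a global Carleman estimate up to a degenerate and merely Lipschitz boundary, is already carried by Theorems~\ref{TH2}--\ref{TH4}, so the present statement is bookkeeping. The single point I would make sure to address in one sentence is that $\partial\Om$ fails to be $C^2$ along all of the cube's edges and lower faces, not only at $(1,1,1,1)$; but the edges and faces lying between two non-degenerate flat faces cause no harm, because $\hat\Om$ only approaches the non-degenerate faces in regions bounded away from $\Gamma_0$ and the Carleman weight $\eta$ of \eqref{ETA1}--\eqref{ETA2} is supported in $\hat\Om$, so those strata never enter the estimate. Hence retaining only the degenerate faces together with $(1,1,1,1)$ in $\Gamma_0$ is legitimate.
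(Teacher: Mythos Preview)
Your proposal is correct and follows exactly the route the paper takes: the paper offers no proof beyond remarking that it is easy to verify Assumption~\ref{assume1} in this case and that Theorem~\ref{TH3} can then be applied, and you simply carry out that verification in detail. Your closing remark about the lower-dimensional non-$C^2$ strata of $\partial(0,1)^4$ other than the vertex $(1,1,1,1)$ flags a point the paper itself passes over; whether one resolves it as you do or by enlarging $\Gamma_0$ (and hence $\omega_0$) to include all such strata---which leaves your ellipticity bound untouched since the added pieces lie in the faces $\{x_i=1\}$---the conclusion is unaffected.
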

	
	\section{Well-posed results}
	In this section, we investigate the well-posedness of \eqref{1.1}. 
	Let   $\mathcal{A}_1 $, $ \mathcal{A}_2$, $D(\mathcal{A}_1)$,  and $D(\mathcal{A}_2)$  be defined as in \eqref{e.2.5} and let $i_1f =i_2f=f$ be the identity operators 
	(defined on different domains $D(\mathcal{A}_1)$,$D(\mathcal{A}_2)$).  
	\begin{lemma}\label{2.15}
		The following results hold:
		\begin{itemize}
			\item [(1)] The injection $i_1: D(\mathcal{A}_1)\to L^2(\Om)$ and $i_2: D(\mathcal{A}_2)\to L^2(\Om)$ are continuous with dense range.
			\item [(2)] The bilinear form $q_1(u,v):= \int_\Om \nabla u \cdot A \nabla v dx, \ (u,v)\in D(\mathcal{A}_1)\times \mcH^1_0(\Omega)$ and $q_2(u,v):= \int_\Om \nabla u \cdot A \nabla v dx, \ (u,v)\in D(\mathcal{A}_2)\times \mcH^1(\Omega),$ are continuous, symmetric.
			\item [(3)]  The operators $(\mathcal{A}_1,D(\mathcal{A}_1))$ and $(\mathcal{A}_2,D(\mathcal{A}_2))$ are self-adjoint and dissipative.
		\end{itemize}
	\end{lemma}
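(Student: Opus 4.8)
I would handle (1) and (2) first, as they are essentially formal. For (1), the estimate $\|i_k z\|_{L^2(\Omega)}\le \|z\|_{\mathcal H^2(\Omega)}$ is immediate, since the $\mathcal H^2$–norm contains $\|z\|_{L^2(\Omega)}$ as a summand; this gives continuity of $i_1,i_2$. Density of the range follows because $C_0^\infty(\Omega)\subset D(\mathcal A_1)$ (a compactly supported smooth function lies in $\mathcal H_0^1(\Omega)$, and $\Div(A\nabla\cdot)$ applied to it is smooth with compact support, hence in $L^2$) and $C_0^\infty(\Omega)\subset D(\mathcal A_2)$ (such a function vanishes near $\Gamma$, so trivially $A\nabla z\cdot\nu|_\Sigma=0$), while $C_0^\infty(\Omega)$ is dense in $L^2(\Omega)$. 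For (2), symmetry is immediate from $A=A^\top$. For continuity I would use the Cauchy–Schwarz inequality for the nonnegative quadratic form $z\mapsto \int_\Omega \nabla z\cdot A\nabla z\,dx$ (nonnegativity from Assumption \ref{assume1}): $|q_k(u,v)|\le \big(\int_\Omega\nabla u\cdot A\nabla u\,dx\big)^{1/2}\big(\int_\Omega\nabla v\cdot A\nabla v\,dx\big)^{1/2}\le \|u\|_{\mathcal H^1(\Omega)}\|v\|_{\mathcal H^1(\Omega)}$, where $\|\cdot\|_{\mathcal H^1(\Omega)}$ denotes the norm induced by the inner product \eqref{inner}; this gives joint continuity of $q_k$ on the relevant product spaces.

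The substance is in (3), and here I would argue via the representation theorem for closed, densely defined, nonnegative symmetric forms rather than by a direct integration by parts — precisely because, as emphasized in the introduction, traces on the degenerate part of $\Gamma$ are not available. The plan is to set $a_k(u,v):=\int_\Omega\nabla u\cdot A\nabla v\,dx$ with form domains $V_1:=\mathcal H_0^1(\Omega)$ and $V_2:=\mathcal H^1(\Omega)$. Each $a_k$ is symmetric and nonnegative by Assumption \ref{assume1}; it is densely defined in $L^2(\Omega)$ since $C_0^\infty(\Omega)\subset V_k$; and it is closed because $V_k$, equipped with the form norm $\big(\|u\|_{L^2(\Omega)}^2+a_k(u,u)\big)^{1/2}$, is exactly (a closed subspace of) the Hilbert space $\mathcal H^1(\Omega)$ of \eqref{inner}. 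Hence there is a unique self-adjoint nonnegative operator $\mathcal B_k$ with
\[
D(\mathcal B_k)=\{u\in V_k:\ \exists\, h\in L^2(\Omega)\ \text{with}\ a_k(u,v)=(h,v)_{L^2(\Omega)}\ \forall v\in V_k\},\qquad \mathcal B_k u=h .
\]

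It then remains to identify $\mathcal B_k=-\mathcal A_k$, which is the only step requiring a genuine argument. Testing the defining identity against $v\in C_0^\infty(\Omega)$ shows $-\Div(A\nabla u)=h\in L^2(\Omega)$ in the distributional sense, so $u\in\mathcal H^2(\Omega)$; combined with $u\in V_k$ this gives $u\in D(\mathcal A_k)$ — for $k=2$ I would read the condition $A\nabla u\cdot\nu|_\Sigma=0$ in \eqref{e.2.5} in its variational sense $a_2(u,v)+(\Div(A\nabla u),v)_{L^2(\Omega)}=0$ for all $v\in\mathcal H^1(\Omega)$, which agrees with the classical flux condition on the non-degenerate $C^2$ portion of $\Gamma$ and is the natural interpretation where no trace exists. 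Conversely, for $u\in D(\mathcal A_k)$ the identity $a_k(u,v)=-(\Div(A\nabla u),v)_{L^2(\Omega)}$ holds for all $v\in V_k$: it is elementary integration by parts with no boundary term when $v\in C_0^\infty(\Omega)$, and both sides are continuous in $v$ for the $V_k$–norm, so it extends to $v\in V_1=\overline{C_0^\infty(\Omega)}$; for $k=2$ the same density argument works once the variational Neumann condition cancels the boundary contribution. Therefore $\mathcal B_k u=-\Div(A\nabla u)=-\mathcal A_k u$ and $D(\mathcal B_k)=D(\mathcal A_k)$, so $\mathcal A_k=-\mathcal B_k$ is self-adjoint, and dissipative because $(\mathcal A_k z,z)_{L^2(\Omega)}=-a_k(z,z)=-\int_\Omega\nabla z\cdot A\nabla z\,dx\le 0$. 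The main obstacle is exactly this identification for the Neumann operator: giving a consistent meaning to the boundary condition and to the integration by parts over the part of $\Gamma$ where $A$ degenerates and $\Gamma$ is merely Lipschitz, which is what forces the variational reading of \eqref{e.2.5} in place of a pointwise one.
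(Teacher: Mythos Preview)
Your proposal is correct, and for parts (1) and (2) it is essentially identical to the paper's argument.

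For part (3) the two routes are closely related but packaged differently. The paper writes down Green's formula $q_k(u,v)=-\int_\Omega(\mathcal A_k u)\,v\,dx$ without further comment, invokes Lax--Milgram to obtain maximal monotonicity, and concludes self-adjointness from that; dissipativity is then checked via the resolvent bound $\|(\gamma I-\mathcal A_k)u\|_{L^2}\ge\gamma\|u\|_{L^2}$. You instead go through the Friedrichs--Kato representation theorem for the closed nonnegative form $a_k$ on $V_k$, obtain the self-adjoint $\mathcal B_k$ directly, and then identify $\mathcal B_k=-\mathcal A_k$; dissipativity is the one-line $(\mathcal A_k z,z)=-a_k(z,z)\le 0$. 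Underneath, these are the same mechanism (Lax--Milgram on $a_k+(\cdot,\cdot)_{L^2}$ is exactly what proves the representation theorem), so neither approach is genuinely more elementary. What your version buys is that it makes explicit the step the paper glosses over: the integration-by-parts identity on a domain with degenerate, merely Lipschitz boundary, and in particular the need to read the Neumann condition in \eqref{e.2.5} variationally rather than pointwise. The paper's version is shorter but leaves exactly that point implicit.
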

	\begin{proof}
		Clearly, the continuity of $i_1$ and $i_2$ follows directly from the definition of $D(\mathcal{A}_1)$ and $D(\mathcal{A}_2)$. Consequently, since $C_0^\infty (\Om)$ is dense in $D(\mathcal{A}_1)$, it is also dense in $L^2(\Om)$. Moreover, since $C_0^\infty (\Om)\subset \mathcal{H}^2(\Om)\subset L^2(\Om)$, we have $\mathcal{H}^2(\Om)$ is dense in $L^2(\Om)$. This establishes (1).
		
		Next, we prove (2). It is evident that $q_1$ and $q_2$ are symmetric, and $q_1(u,u) \ge 0$ and $q_2(u,u) \ge 0$ for all $u$. Furthermore,
		$$
		|q_1(u,v)|\le\left\| u\right\| _{\mcH_0^1(\Omega)} \left\| v\right\| _{\mcH_0^1(\Omega)},\ |q_2(u,v)|\le\left\| u\right\| _{\mcH^1(\Omega)} \left\| v\right\| _{\mcH^1(\Omega)}.
		$$
		Hence, $q_1$ and $q_2$ are continuous    and monotone. 
		
		Now, let $(u_1,v_1) \in D(\mathcal{A}_1)\times \mcH^1_0(\Omega)$ and $(u_2,v_2)\in D(\mathcal{A}_2)\times \mcH^1(\Omega)$.   Applying Green's  formula, we have
		$$
		q_1(u_1,v_1)=-\int_{\Om}\mathcal{A}_1 u_1 \cdot v_1 d x, \ q_2(u_2,v_2)=-\int_{\Om}\mathcal{A}_2 u_2 \cdot v_2 d x.
		$$
		From Lax-Milgram theorem we can deduce that $\mathcal{A}_1$ and $\mathcal{A}_2$  are  maximal monotone. Hence,   $\mathcal{A}_1$ and $\mathcal{A}_2$ are self-adjoint.
		Furthermore, since
		$$
		\begin{aligned}
			\left\| (\gamma I - \mathcal{A}_1)u_1\right\| _{L^2(\Om)}
			&=\sup_{\|v_1\|_{L^2(\Om)} \le 1, v_1 \in \mcH_0^1(\Omega)} \int_{\Omega} \gamma v_1 u_1 - v_1 \mathcal{A}_1 u_1 dx
			\ge  \gamma \left\| u_1\right\| _{L^2(\Om)},\\
			\left\| (\gamma I - \mathcal{A}_2)u_2\right\| _{L^2(\Om)}
			&=\sup_{\|v_2\|_{L^2(\Om)} \le 1, v_2 \in \mcH^1(\Omega)} \int_{\Omega} \gamma v_2 u_2 - v_2 \mathcal{A}_2 u_2 dx
			\ge  \gamma \left\| u_2\right\| _{L^2(\Om)},
		\end{aligned}
		$$
		we conclude that $\mathcal{A}_1$ and $\mathcal{A}_2$ are dissipative. 
	\end{proof}
	
	\begin{proof}[{\bf Proof of Theorem \ref{exist1}}].  By above Lemma \ref{2.15}   both $\mathcal{A}_1$ and $\mathcal{A}_2$ can serve as   infinitesimal generators of   strongly continuous semigroups denoted by $e^{t\mathcal{A}_1}$ and $e^{t\mathcal{A}_2}$, respectively.
		Consequently, utilizing the standard techniques (e.g.  \cite{bensoussan2007representation,cazenave1998introduction,showalter1995hilbert}), one can establish the validity of Theorem \ref{exist1}.
	\end{proof} 
	
	\section{Carleman estimates}
	Let us now derive the  Carleman estimate. We begin by considering the  Dirichlet boundary condition.
	
	\begin{proof}[{\bf Proof of Theorem \ref{TH2}}]	Let $\omega$ be a nonempty open set satisfying $\omega\subset\subset \om_0$. Recall $\xi$ is defined in \eqref{e.2.7}. Consider $s>s_0>0$ and introduce
		\begin{equation}\label{Defineu}
			u(x,t)=e^{-s\sigma(x,t)} w(x,t), 
		\end{equation}
		where $w$ is the solution of equation \eqref{Dirichlet}.
		It is easy to demonstrate    the following properties   for $u$:
		\begin{itemize}
			\item [($i$)] $u=\frac{\partial u}{\partial x_i}=0$ at  $t=0$ and $t=T$; 
			\item [($ii$)] $u=0$  on $\Sigma$;
			\item [($iii$)] Let 
			\begin{equation*}
				\begin{split}
					P_1 u
					&:=-2s\lambda^2\xi u  A \nabla \eta\cdot \nabla \eta- 2s\lambda\xi   A \nabla u\cdot \nabla \eta +u_t=:(P_1u)_1+(P_1u)_2+(P_1u)_3,\\
					P_2 u
					&:=s^2\lambda^2\xi^2 u  A \nabla \eta\cdot \nabla \eta+\Div( A \nabla u)+s \sigma_t u=:(P_2u)_1+(P_2u)_2+(P_2u)_3,\\
					f_{s,\lambda}
					&:=e^{-s \sigma} f - s\lambda^2\xi u  A \nabla \eta\cdot \nabla \eta + s\lambda \xi u\Div( A \nabla \eta) .
				\end{split}
			\end{equation*}
			Then 
			\begin{equation*}
				P_1 u+P_2 u=f_{s,\lambda}.
			\end{equation*}
		\end{itemize}
		Taking the $L^2(Q)$ norm of the above identity and using  ($iii$),  we see    
		\begin{equation}\label{PPP}
			\left\|P_1 u\right\|_{L^2(Q)}^2+\left\|P_2 u\right\|_{L^2(Q)}^2+2\sum_{i,j=1}^{3}\left((P_1 u)_i, (P_2 u)_j\right)_{L^2(Q)}=\left\|f_{s,\lambda}\right\|_{L^2(Q)}^2.
		\end{equation} 
		First, we have
		\begin{equation}\label{T1}
			\left((P_1 u)_1, (P_2 u)_1\right)_{L^2(Q)}=-2s^3 \lambda^4 \iint_Q \left|  A \nabla  \eta \cdot \nabla \eta\right| ^2 \xi^3 u^2 dx  dt=:T_1.
		\end{equation}
		By \eqref{ETA1}, $\eta=0$ outside of $\hat{\Om}$, we can deduce
		\begin{equation*}
			\begin{split}
				\left((P_1 u)_2, (P_2 u)_1\right)_{L^2(Q)}=
				&-2s^3 \lambda^3 \iint_Q \xi^3  (A \nabla  \eta \cdot \nabla \eta)   (A \nabla \eta  \cdot \nabla u )u dx  dt\\
				=& -   s^3 \lambda^3 \iint_Q \xi^3  (A \nabla  \eta \cdot \nabla \eta)   A  \nabla \eta \cdot \nabla (u^2) dx  dt\\
				=& - s^3 \lambda^3 \int_{0}^{T}\int_{\hat{\Om}} \xi^3  (A \nabla  \eta \cdot \nabla \eta)   A  \nabla \eta \cdot \nabla (u^2) dx  dt.
			\end{split}
		\end{equation*}
		Thus
		\begin{equation*}
			\begin{split}
				\left((P_1 u)_2, (P_2 u)_1\right)_{L^2(Q)}
				=&-s^3 \lambda^3 \int_{0}^{T}\int_{\partial\hat{\Om}} \xi^3 u^2 (A \nabla  \eta \cdot \nabla \eta)   (A  \nabla \eta \cdot \nu)  ds  dt\\
				&\qquad +s^3 \lambda^3 \int_{0}^{T}\int_{\hat{\Om}} \Div(\xi^3  (A \nabla  \eta \cdot \nabla \eta)   A  \nabla \eta) u^2 dx  dt\\ 
				=& s^3 \lambda^3 \int_{0}^{T}\int_{\hat{\Om}} \Div(\xi^3  (A \nabla  \eta \cdot \nabla \eta)   A  \nabla \eta) u^2 dx  dt, 
			\end{split}
		\end{equation*}
		where the first integral vanishes because of  the Dirichlet boundary condition   $u \in \mathcal{H}_0^1(\Om)$ and  $\nabla \eta=0$ on $\partial \hat{\Om}- \partial \hat{\Om}\cap \partial \Om$.   Since $\eta=0$ outside of $\hat{\Om}$,  
		expanding the divergence   we have  
		\begin{equation*}
			\begin{split}
				\left((P_1 u)_2, (P_2 u)_1\right)_{L^2(Q)}
				=& s^3 \lambda^3 \int_{0}^{T}\int_{\hat{\Om}} \Div(\xi^3  (A \nabla  \eta \cdot \nabla \eta)   A  \nabla \eta) u^2 dx  dt\\
				=&s^3 \lambda^3 \iint_Q \xi^3\Div(  A \nabla  \eta)    (A \nabla  \eta \cdot \nabla \eta)  u^2 dx  dt\\
				&\qquad +3s^3 \lambda^4 \iint_Q \left|  A \nabla  \eta \cdot \nabla \eta\right| ^2 \xi^3 u^2 dx  dt\\
				&\qquad +s^3 \lambda^3 \iint_Q \xi^3  A \nabla  \eta   \cdot \nabla(  A \nabla  \eta\cdot \nabla  \eta)u^2 dx  dt\\
				=&: T_2 + T_3 +T_4.
			\end{split}
		\end{equation*}
		We clearly have that 
		$T_1+T_3$ is a positive term. However, we need to use them to control some other possibly negative terms, so we need to further estimate them. By  the properties of $\eta$, we have
		\begin{equation}\label{T1T3}
			\begin{split}
				T_1+T_3=&s^3 \lambda^4 \iint_Q \left|  A \nabla  \eta \cdot \nabla \eta\right| ^2 \xi^3 u^2 dx  dt\\
				\ge&   s^3 \lambda^4 \iint_Q \left|  A \nabla  \eta \cdot \nabla \eta\right| ^2 \xi^3 u^2 dx  dt \   - s^3 \lambda^4 \int_{0}^{T} \int_{\om}   \xi^3 u^2 dx  dt=: T^*+\hat{T} \,. 
			\end{split}
		\end{equation}
		The term $T_2$ is estimated as follows.  
		\begin{equation*}
			\begin{split}
				T_2=&s^3 \lambda^3 \iint_Q \xi^3\Div(  A \nabla  \eta)    (A \nabla  \eta \cdot \nabla \eta)  u^2 dx  dt\\
				=&s^3 \lambda^3 \int_{0}^{T}\int_{\om} \xi^3\Div(  A \nabla  \eta)    (A \nabla  \eta \cdot \nabla \eta)  u^2 dx  dt\\
				&\qquad +s^3 \lambda^3 \int_{0}^{T}\int_{\Om\backslash\om} \xi^3\Div(  A \nabla  \eta)    (A \nabla  \eta \cdot \nabla \eta)  u^2 dx  dt\\
				=&:T_{21} + T_{22}.
			\end{split}
		\end{equation*}
		Since $A\in C^2(\Omega)$ and $\eta\in C_0^3(\Omega)$, $\Div(  A \nabla  \eta)   ( A \nabla  \eta \cdot \nabla \eta)$ is bounded in $\Om$,  we have  for some $C=C(\om,\Om)$ 
		\begin{equation*}
			\begin{split}
				T_{21}=s^3 \lambda^3 \int_{0}^{T}\int_{\om} \xi^3\Div(  A \nabla  \eta)    (A \nabla  \eta \cdot \nabla \eta)  u^2 dx  dt
				\ge&-Cs^3 \lambda^3 \int_{0}^{T}\int_{\om} \xi^3  u^2 dx  dt,
			\end{split}
		\end{equation*}
		and
		\begin{equation*}
			\begin{split}
				T_{22}=s^3 \lambda^3 \int_{0}^{T}\int_{\Om\backslash\om} \xi^3\Div(  A \nabla  \eta)    (A \nabla  \eta \cdot \nabla \eta)  u^2 dx  dt
				\ge&-Cs^3 \lambda^3 \int_{0}^{T}\int_{\Om\backslash\om} \xi^3  u^2 dx  dt.
			\end{split}
		\end{equation*}
		From $\left\| \nabla \eta \right\|_{L^2(\Om)} \ge C >0$ in $\Om_0$, and $\Om\backslash\om \subset \Om_0$,  it follows 
		\begin{equation*}
			\begin{split}
				-Cs^3 \lambda^3 \int_{0}^{T}\int_{\Om\backslash\om} \xi^3  u^2 dx  dt\ge&
				-Cs^3 \lambda^3 \int_{0}^{T}\int_{\Om\backslash\om} \xi^3 |  A \nabla \eta \cdot \nabla \eta |^2 u^2 dx  dt\\
				\ge&
				-Cs^3 \lambda^3 \iint_{Q} \xi^3 |  A \nabla \eta \cdot \nabla \eta |^2 u^2 dx  dt.
			\end{split}
		\end{equation*}
		Thus we have
		\begin{equation}\label{T2}
			\begin{split}
				T_2
				\ge-Cs^3 \lambda^3 \int_{0}^{T}\int_{\om} \xi^3  u^2 dx  dt 	-Cs^3 \lambda^3 \iint_{Q} \xi^3 |  A \nabla \eta \cdot \nabla \eta |^2 u^2 dx  dt.
			\end{split}
		\end{equation}
		In a similar way, we can deduce
		\begin{equation}\label{T4}
			\begin{split}
				T_4=&s^3 \lambda^3 \iint_Q \xi^3  A \nabla  \eta   \cdot \nabla(  A \nabla  \eta\cdot \nabla  \eta)u^2 dx  dt\\
				\ge&-Cs^3 \lambda^3 \int_{0}^{T}\int_{\om} \xi^3  u^2 dx  dt 	-Cs^3 \lambda^3 \iint_{Q} \xi^3 |  A \nabla \eta \cdot \nabla \eta |^2 u^2 dx  dt.
			\end{split}
		\end{equation}
		Hence, the terms $T_2$ and $T_4$ are absorbed by $T^*$ and $\hat{T}$ by simply taking $\lambda \ge C$
		(which means that there is $\lambda\ge C$ such that 
		$T_2+T_4+T^*+\hat{T}\ge C (T^*+\hat{T})$ for some positive $C >0$).
		
		By items ($i$) and    ($ii$) for the solution $u$, we have
		\begin{equation*}
			\begin{split}
				T_5:=&	\left((P_1 u)_3, (P_2 u)_1\right)_{L^2(Q)}= 
				s^2 \lambda^2 \iint_Q \xi^2  (A \nabla \eta \cdot \nabla \eta) u u_t dx  dt\\
				=&\frac{1}{2}s^2 \lambda^2 \int_\Om \xi^2  (A \nabla \eta \cdot \nabla \eta) u^2 dx \bigg|_0^T - s^2 \lambda^2 \iint_Q \xi \xi_t  (A \nabla \eta \cdot \nabla \eta) u^2 dx  dt\\
				=&- s^2 \lambda^2 \iint_Q \xi \xi_t  (A \nabla \eta \cdot \nabla \eta) u^2 dx  dt\,. 
			\end{split}
		\end{equation*}
		By the definitions of $\xi$ in \eqref{e.2.7}, we have $\xi\xi_t\le C\xi^3$. Similar to $T_2$,
		\begin{equation}\label{T5}
			\begin{split}
				T_5\ge- Cs^2 \lambda^2 \iint_Q \xi^3 |  A \nabla \eta \cdot \nabla \eta |^2 u^2 dx  dt
				- Cs^2 \lambda^2 \int_{0}^{T}\int_{\om} \xi^3  u^2 dx  dt 
			\end{split}
		\end{equation}
		for some $C=C(\om,\Om,T)$. This term can   be absorbed by $T^*$ and $\hat{T}$. Consequently, by  \eqref{T2},\eqref{T1T3}, \eqref{T4}, and \eqref{T5}, we have
		\begin{equation}\label{P1P21}
			\begin{split}
				\big((P_1 u),& (P_2 u)_1\big) _{L^2(Q)}\\
				=&\left((P_1 u)_1, (P_2 u)_1\right)_{L^2(Q)} + \left((P_1 u)_2, (P_2 u)_1\right)_{L^2(Q)}+\left((P_1 u)_3, (P_2 u)_1\right)_{L^2(Q)}\\
				\ge&C s^3 \lambda^4 \iint_Q \left|  A \nabla  \eta \cdot \nabla \eta\right| ^2 \xi^3 u^2 dx  dt - Cs^3 \lambda^4 \int_{0}^{T} \int_{\om}   \xi^3 u^2 dx  dt.
			\end{split}
		\end{equation}
		On the other hand, we have
		\begin{equation*}
			\begin{split}
				\left((P_1 u)_1, (P_2 u)_2\right)_{L^2(Q)}=&-2s \lambda^2 \iint_Q \xi   (A \nabla \eta \cdot \nabla \eta)  \Div( A \nabla u) u dx  dt\\
				=&-2s \lambda^2 \int_{0}^{T} \int_{\hat{\Om}}\xi   (A \nabla \eta \cdot \nabla \eta)  \Div( A \nabla u) u dx  dt\\
				=&-2s \lambda^2 \int_{0}^{T} \int_{\partial\hat{\Om}} \xi u  (A \nabla \eta \cdot \nabla \eta)  ( A \nabla u \cdot \nu) ds  dt\\
				&\qquad+2s \lambda^2 \int_{0}^{T} \int_{\hat{\Om}} \nabla (u \xi   (A \nabla \eta \cdot \nabla \eta) ) \cdot  A \nabla u dx  dt\\
				=&-2s \lambda^2 \int_{0}^{T} \int_{\partial\hat{\Om}} \xi u  (A \nabla \eta \cdot \nabla \eta)  ( A \nabla u \cdot \nu) ds  dt\\
				&\qquad+2s \lambda^2 \iint_Q \xi  (A \nabla \eta \cdot \nabla \eta)  (A \nabla u \cdot \nabla u)   dx  dt\\
				&\qquad+2s \lambda^2 \iint_Q \xi  u  A \nabla u  \cdot \nabla(  A \nabla \eta \cdot \nabla \eta)dx  dt\\
				&\qquad +2s \lambda^3 \iint_Q \xi u  (A \nabla \eta \cdot \nabla \eta)  (A \nabla u \cdot \nabla \eta)   dx  dt\\
				=&:B_2+T_6+T_7+T_8.
			\end{split}
		\end{equation*}
		Similarly to the previous analysis, we can deduce that $B_2=0$. We will not need to process  $T_6$ since it is positive and we do not plan to use it to control other negative terms.   For $T_7$ and $T_8$, similar to the estimate of $T_2$, 
		we have
		\begin{equation*}
			\begin{split}
				T_8=&2s \lambda^3 \iint_Q \xi u  (A \nabla \eta \cdot \nabla \eta)  (A \nabla u \cdot \nabla \eta)   dx  dt\\
				\ge&-Cs^2 \lambda^4 \iint_Q \xi \left|  A \nabla  \eta \cdot \nabla \eta\right| ^2 u^2  dx  dt\\
				&\qquad -C \lambda^2 \iint_Q \xi   (A \nabla \eta \cdot \nabla \eta)  (A \nabla u \cdot \nabla u) dx  dt 
			\end{split}
		\end{equation*}
		and 
		\begin{equation*}
			\begin{split}
				T_7=&2s \lambda^2 \iint_Q \xi u\nabla(  A \nabla \eta \cdot \nabla \eta) \cdot  A \nabla u  dx  dt\\
				\ge&-Cs^2 \lambda^3 \iint_Q \xi  A \nabla(  A \nabla \eta \cdot \nabla \eta) \cdot \nabla(  A \nabla \eta \cdot \nabla \eta) u^2  dx  dt\\
				&\qquad -C \lambda \iint_Q \xi  (A \nabla u \cdot \nabla u) dx  dt\\
				\ge& -Cs^2 \lambda^3 \iint_Q \left|  A \nabla  \eta \cdot \nabla \eta\right| ^2 \xi u^2 dx  dt - Cs^2 \lambda^3 \int_{0}^{T} \int_{\om}   \xi u^2 dx  dt\\
				&\qquad -C \lambda \iint_Q \xi  (A \nabla \eta \cdot \nabla \eta) (A \nabla u \cdot \nabla u) dx  dt
				-C \lambda \int_{0}^{T}\int_\om \xi  A \nabla u \cdot \nabla u dx  dt.
			\end{split}
		\end{equation*}
		
		Thus, we have 
		\begin{equation}\label{P11P22}
			\begin{split}
				\big((P_1 u)_1,& (P_2 u)_2\big)_{L^2(Q)}\\
				\ge&T_6
				-Cs^2 \lambda^4 \iint_Q \left|  A \nabla  \eta \cdot \nabla \eta\right| ^2 \xi u^2 dx  dt - Cs^2 \lambda^3 \int_{0}^{T} \int_{\om}   \xi u^2 dx  dt\\
				&\quad -C \lambda^2 \iint_Q \xi   (A \nabla \eta \cdot \nabla \eta)  (A \nabla u \cdot \nabla u) dx  dt
				-C \lambda \int_{0}^{T}\int_\om \xi  A \nabla u \cdot \nabla u dx  dt.
			\end{split}
		\end{equation}
		We also have
		\begin{equation*}
			\begin{split}
				\left((P_1 u)_2, (P_2 u)_2\right)_{L^2(Q)}
				=& -2s \lambda \iint_Q \xi   (A \nabla  u \cdot \nabla \eta) 
				\Div( A \nabla u) dx dt\\
				=& -2s \lambda \int_{0}^{T}\int_{\hat{\Om}} \xi   (A \nabla  u \cdot \nabla \eta) 
				\Div( A \nabla u) dx dt\\
				=& - 2s \lambda \int_{0}^{T}\int_{\partial\hat{\Om}} \xi \left(  A\nabla u\cdot \nabla \eta \right) (A \nabla u \cdot \nu) ds dt\\
				&\qquad+2s \lambda^2 \iint_Q \xi   (A \nabla  u \cdot \nabla \eta)   (A \nabla  u \cdot \nabla \eta) dx dt\\
				&\qquad +2s \lambda \iint_Q \xi 
				A \nabla u \cdot \nabla(   A \nabla  u \cdot \nabla \eta) dx dt.
			\end{split}
		\end{equation*}
		Denote $- 2s \lambda \int_{0}^{T}\int_{\partial\hat{\Om}} \xi \left(  A\nabla u\cdot \nabla \eta \right) (A \nabla u \cdot \nu) ds dt$ by $B_3$, which is a boundary term  and denote $2s \lambda^2 \iint_Q \xi   (A \nabla  u \cdot \nabla \eta)   (A \nabla  u \cdot \nabla \eta) dx dt$, which  is a positive term.
		Furthermore,
		\begin{equation*}
			\begin{split}
				&2s \lambda \iint_Q \xi   A \nabla u \cdot \nabla(   A \nabla  u \cdot \nabla \eta)dx dt\\
				=&2s \lambda \sum_{i=1}^{N}\iint_Q  \xi \left(  A \nabla  \eta\right)_i  A \nabla  u \cdot \frac{\partial}{\partial x_i} (\nabla u)  + \xi(\nabla u)_i   A \nabla  u \cdot \nabla \left(  A \nabla  \eta\right) _i  dx dt.\\
			\end{split}
		\end{equation*}
		We denote $2s \lambda \sum_{i=1}^{N}\iint_Q \xi (\nabla u)_i   A \nabla  u \cdot \nabla \left(  A \nabla  \eta\right) _i  dx dt$ by $T_{10}$, the $i$ here refers to the i-th component. After some additional computations we   see that
		\begin{equation*}
			\begin{split}
				&2s \lambda \sum_{i=1}^{N}\iint_Q  \xi \left(  A \nabla  \eta\right)_i  A \nabla  u \cdot \frac{\partial}{\partial x_i} (\nabla u) dx dt\\
				=& s \lambda \sum_{i=1}^{N}\iint_Q  \xi \left(  A \nabla  \eta\right)_i \left[ \frac{\partial}{\partial x_i} \left( A \nabla  u \cdot \nabla u  \right) - \frac{\partial( A)}{\partial x_i} \nabla  u \cdot \nabla u \right]  dx dt\\
				=&s \lambda \int_{0}^{T}\int_{\partial\hat{\Om}} \xi \left(  A\nabla u\cdot \nabla u \right) (A\nabla \eta \cdot \nu) ds dt
				-s \lambda \iint_Q \xi   (A \nabla u \cdot \nabla u) \Div( A \nabla \eta)  dx dt\\
				&\qquad -s \lambda \sum_{i=1}^{N}\iint_Q \xi \left(  A \nabla  \eta\right)_i \frac{\partial( A)}{\partial x_i} \nabla  u \cdot \nabla u   dx dt - s\lambda^2 \iint_Q \xi   (A \nabla \eta \cdot \nabla \eta ) (A \nabla u \cdot \nabla u) dx  dt\\
				=&:B_{4}+T_{11}+T_{12} +T_{13}.
			\end{split}
		\end{equation*}
		It is easy to see that 
		\begin{equation}\label{T6T13}
			T_6 + T_{13}=s\lambda^2 \iint_Q \xi   (A \nabla \eta \cdot \nabla \eta)  (A \nabla u \cdot \nabla u) dx  dt\ge 0.
		\end{equation}
		Since $u\in \mathcal{H}_0^1(\Omega)$, by virtue of the properties satisfied by $\eta$ in \eqref{ETA1} and \eqref{ETA2}, and that   $\nabla u,\nabla \eta$ are parallel to the outward unit normal vector $\nu$ on $\partial\hat{\Om}\cap\partial\Om$ ($\nabla u = \pm|\nabla u|\nu$, $\nabla \eta = -|\nabla \eta|\nu$), $\nabla \eta =0$ on $\partial \hat{\Om}- \partial \hat{\Om}\cap \partial \Om$,   we have 
		\begin{equation*}
			\begin{split}
				B_3 + B_{4}=& - 2s \lambda \int_{0}^{T}\int_{\partial\hat{\Om}} \xi \left(  A\nabla u\cdot \nabla \eta \right) A \nabla u \cdot \nu ds dt\\
				&\qquad + 
				s \lambda \int_{0}^{T}\int_{\partial\hat{\Om}} \xi \left(  A\nabla u\cdot \nabla u \right) A\nabla \eta \cdot \nu ds dt\\
				=& - 2s \lambda \int_{0}^{T}\int_{\partial \hat{\Om}\cap \partial \Om} \xi \left(  A\nabla u\cdot \nabla \eta \right) A \nabla u \cdot \nu ds dt\\ 
				&\qquad  - 2s \lambda \int_{0}^{T}\int_{\partial \hat{\Om}- \partial \hat{\Om}\cap \partial \Om} \xi \left(  A\nabla u\cdot \nabla \eta \right) A \nabla u \cdot \nu ds dt  \\
				&\qquad + s \lambda \int_{0}^{T}\int_{\partial \hat{\Om}\cap \partial \Om} \xi \left(  A\nabla u\cdot \nabla u \right) A\nabla \eta \cdot \nu ds dt\\
				&\qquad   +s \lambda \int_{0}^{T}\int_{\partial \hat{\Om}- \partial \hat{\Om}\cap \partial \Om} \xi \left(  A\nabla u\cdot \nabla u \right) A\nabla \eta \cdot \nu ds dt \\
				=& - 2s \lambda \int_{0}^{T}\int_{\partial \hat{\Om}\cap \partial \Om} \xi \left(  A\nabla u\cdot \nabla \eta \right) A \nabla u \cdot \nu ds dt\\
				&\qquad + s \lambda \int_{0}^{T}\int_{\partial \hat{\Om}\cap \partial \Om} \xi \left(  A\nabla u\cdot \nabla u \right) A\nabla \eta \cdot \nu ds dt\\
				=& s \lambda \int_{0}^{T}\int_{\partial \hat{\Om}\cap \partial \Om} \xi |\nabla u|^2 |\nabla \eta| |A\nu \cdot \nu|^2 ds dt 
				\ge 0.
			\end{split}
		\end{equation*}
		For $T_{10}$, we have
		\begin{equation*}
			\begin{split}
				T_{10}=&2s \lambda \sum_{i=1}^{N}\iint_Q \xi (\nabla u)_i   A \nabla  u \cdot \nabla \left(  A \nabla  \eta\right) _i  dx dt\\
				=&2s \lambda \sum_{i=1}^{N} \iint_Q (\xi^{\frac{1}{2}}   A^{\frac{1}{2}} \nabla u)(\xi^{\frac{1}{2}}   A^{\frac{1}{2}} \nabla \left(  A \nabla  \eta\right) _i  (\nabla u)_i ) dx dt\\
				\ge& -Cs \lambda \iint_Q \xi   A\nabla u  \cdot \nabla u dx dt\\
				&\qquad  -Cs \lambda \sum_{i=1}^{N}\iint_Q \xi   A\nabla \left(  A \nabla  \eta\right) _i \cdot \nabla \left(  A \nabla  \eta\right) _i  (\nabla u)_i \cdot (\nabla u)_i dx dt\\
				\ge& -Cs \lambda \iint_Q \xi   A\nabla u  \cdot \nabla u dx dt\\
				&\qquad  -Cs \lambda \sum_{i=1}^{N}\int_{0}^{T}\int_{\hat{\Om}} \xi   A\nabla \left(  A \nabla  \eta\right) _i \cdot \nabla \left(  A \nabla  \eta\right) _i  (\nabla u)_i \cdot (\nabla u)_i dx dt.
			\end{split}
		\end{equation*}
		Since $A\nabla \left(  A \nabla  \eta\right) _i \cdot \nabla \left(  A \nabla  \eta\right) _i \in C^0(\overline{\Omega})$, similar to $T_2$, we have
		\begin{equation*}
			\begin{split}
				-Cs \lambda \sum_{i=1}^{N}&\int_{0}^{T}\int_{\hat{\Om}} \xi   A\nabla \left(  A \nabla  \eta\right) _i \cdot \nabla \left(  A \nabla  \eta\right) _i  (\nabla u)_i \cdot (\nabla u)_i dx dt\\
				\ge&
				-Cs \lambda \sum_{i=1}^{N}\int_{0}^{T}\int_{\hat{\Om}} \xi     (\nabla u)_i \cdot (\nabla u)_i dx dt\\
				\ge&
				-Cs \lambda \int_{0}^{T}\int_{\hat{\Om}} \xi   A\nabla u  \cdot \nabla u dx dt\\
				\ge&-Cs \lambda \iint_Q \xi   A\nabla u  \cdot \nabla u dx dt\\
				\ge&-C s\lambda \iint_Q \xi   A \nabla \eta \cdot \nabla \eta  A \nabla u \cdot \nabla u dx  dt -Cs \lambda \int_{0}^{T}\int_\om \xi   A\nabla u  \cdot \nabla u dx dt.
			\end{split}
		\end{equation*} 
		In a similar way, we also have 
		\begin{equation*}
			\begin{split}
				T_{11}=&-s \lambda \iint_Q \xi   A \nabla u \cdot \nabla u \Div( A \nabla \eta)  dx dt\\
				\ge&  -Cs \lambda \iint_Q \xi   A\nabla u  \cdot \nabla u dx dt \\
				\ge&-C s\lambda \iint_Q \xi   A \nabla \eta \cdot \nabla \eta  A \nabla u \cdot \nabla u dx  dt -Cs \lambda \int_{0}^{T}\int_\om \xi   A\nabla u  \cdot \nabla u dx dt,
			\end{split}
		\end{equation*}  
		and
		\begin{equation*}
			\begin{split}
				T_{12}=&-s \lambda \sum_{i=1}^{N}\iint_Q \xi \left(  A \nabla  \eta\right)_i \frac{\partial( A)}{\partial x_i} \nabla  u \cdot \nabla u   dx dt\\
				\ge&-Cs \lambda \iint_Q \xi   A\nabla u  \cdot \nabla u dx dt\\
				\ge&  -C s\lambda \iint_Q \xi   A \nabla \eta \cdot \nabla \eta  A \nabla u \cdot \nabla u dx  dt -Cs \lambda \int_{0}^{T}\int_\om \xi   A\nabla u  \cdot \nabla u dx dt. 
			\end{split}
		\end{equation*}
		Consequently,
		\begin{equation}\label{P12P22}
			\begin{split}
				\left((P_1 u)_2, (P_2 u)_2\right)_{L^2(Q)}
				\ge&
				T_{13} + Cs \lambda^2 \iint_Q \xi  \left|  A \nabla  u \cdot \nabla \eta\right| ^2  dx  dt\\
				&\qquad -C s\lambda \iint_Q \xi   A \nabla \eta \cdot \nabla \eta  A \nabla u \cdot \nabla u dx  dt\\
				&\qquad -Cs \lambda \int_{0}^{T}\int_\om \xi   A\nabla u  \cdot \nabla u dx dt\,. 
			\end{split}
		\end{equation}
		Since $u_t \in \mathcal{H}_0^1(\Om)$ and $\nabla \eta=0$ on $\partial \hat{\Om}- \partial \hat{\Om}\cap \partial \Om$, we have 
		\[
		B_5 :=\iint_\Sigma u_t  (A\nabla u \cdot \nu) ds dt=0\,.
		\]
		Thus, we find that
		\begin{equation}\label{P13P22}
			\begin{split}
				\left((P_1 u)_3, (P_2 u)_2\right)_{L^2(Q)}
				=
				& \iint_Q \Div( A \nabla  u) u_t dx  dt\\
				=&\iint_\Sigma u_t  (A\nabla u \cdot \nu) ds dt - \iint_{Q}  A\nabla u \cdot \nabla u_t dxdt\\
				=&  - \frac12 \iint_{Q} \frac{d}{dt}  \left(A\nabla u \cdot \nabla u \right)  dxdt\\
				=&-\frac{1}{2}\int_\Om  A\nabla u \cdot \nabla u dx \bigg|_0^T =0 
			\end{split}
		\end{equation}
		Now from \eqref{P11P22}, \eqref{T6T13}, \eqref{P12P22}, and \eqref{P13P22},it follows that for sufficiently large $\lambda$,  
		\begin{equation}\label{P1P22}
			\begin{split}
				\big((P_1 u), &(P_2 u)_2\big)_{L^2(Q)}\\
				=&\left((P_1 u)_1, (P_2 u)_2\right)_{L^2(Q)} + \left((P_1 u)_2, (P_2 u)_2\right)_{L^2(Q)}+\left((P_1 u)_3, (P_2 u)_2\right)_{L^2(Q)}\\
				\ge&Cs \lambda^2 \iint_Q \xi  \left|  A \nabla  u \cdot \nabla \eta\right| ^2  dx  dt
				+C s\lambda^2 \iint_Q \xi   A \nabla \eta \cdot \nabla \eta  A \nabla u \cdot \nabla u dx  dt\\
				& \qquad  -Cs^2 \lambda^4 \iint_Q \left|  A \nabla  \eta \cdot \nabla \eta\right| ^2 \xi u^2 dx  dt
				- Cs^2 \lambda^3 \int_{0}^{T} \int_{\om}   \xi u^2 dx  dt\\
				&\qquad  -Cs \lambda \int_{0}^{T}\int_\om \xi   A\nabla u  \cdot \nabla u dx dt.
			\end{split}
		\end{equation}
		Let us now consider the scalar product
		$\left((P_1 u)_1, (P_2 u)_3\right)_{L^2(Q)}$,
		\begin{equation*}
			\begin{split}
				\left((P_1 u)_1, (P_2 u)_3\right)_{L^2(Q)}
				=&-2s^2\lambda^2 \iint_Q \xi \sigma_t  A \nabla  \eta \cdot \nabla \eta u^2 dxdt=:T_{14}\,. 
			\end{split}
		\end{equation*}
		Since $\sigma_t\xi \le C\xi^3$, we have
		\begin{equation*}
			\begin{split}
				T_{14}\ge&-Cs^2\lambda^2  \iint_Q \xi^3 u^2 dxdt\\
				\ge&-Cs^2\lambda^2  \int_0^T\int_{\omega} \xi^3 u^2 dxdt
				-Cs^2 \lambda^2 \iint_Q \left|  A \nabla  \eta \cdot \nabla \eta\right| ^2 \xi^3 u^2 dx  dt.
			\end{split}
		\end{equation*}
		Obviously, this is absorbed by $T^*$ and $\hat{T}$ if we choose $s$ and $\lambda$   sufficiently large.
		Now we consider 
		\begin{equation*}
			\begin{split}
				\left((P_1 u)_2, (P_2 u)_3\right)_{L^2(Q)}
				=&-2s^2\lambda \iint_Q \xi \sigma_t  A \nabla  \eta \cdot \nabla  u u dxdt\\
				=&-s^2\lambda \iint_Q \xi \sigma_t  A \nabla  \eta \cdot \nabla  (u^2) dxdt\\
				=&-s^2\lambda \int_{0}^{T}\int_{\hat{\Om}} \xi \sigma_t  A \nabla  \eta \cdot \nabla  (u^2) dxdt\\
				=&-s^2\lambda \int_{0}^{T}\int_{\partial\hat{\Om}} u^2 \xi  \sigma_t  (A \nabla  \eta \cdot \nu) dsdt\\
				&\qquad  +s^2\lambda \int_{0}^{T}\int_{\hat{\Om}} u^2 \Div(\xi  \sigma_t  A \nabla  \eta) dxdt\\
				=&-s^2\lambda \int_{0}^{T}\int_{\partial\hat{\Om}} u^2 \xi  \sigma_t  (A \nabla  \eta \cdot \nu) dsdt+ s^2\lambda^2 \iint_Q \xi \sigma_t  A \nabla  \eta \cdot \nabla  \eta u^2 dxdt\\
				&\qquad+s^2\lambda \iint_Q \xi   A \nabla  \eta  \cdot \nabla \sigma_t u^2 dxdt
				+s^2\lambda \iint_Q \xi  \sigma_t \Div( A \nabla  \eta)  u^2 dxdt\\
				=&:B_6 +T_{15} +T_{16} +T_{17}.
			\end{split}
		\end{equation*}
		Using the same argument as that in the previous proof, we have $B_6 =0$ and
		\begin{equation*}
			\begin{split}
				T_{15}
				=& s^2\lambda^2 \iint_Q \xi \sigma_t  A \nabla  \eta \cdot \nabla  \eta u^2 dxdt\\
				\ge&-Cs^2\lambda^2 \int_{0}^{T}\int_\om \xi^3 u^2 dxdt
				-Cs^2\lambda^2 \iint_Q \xi^3 \left|  A \nabla  \eta \cdot \nabla \eta\right| ^2 u^2 dxdt.
			\end{split}
		\end{equation*}
		For $T_{16}$ and $T_{17}$ we have,
		\begin{equation*}
			\begin{split}
				T_{16}
				=&s^2\lambda \iint_Q \xi \nabla \sigma_t  A \nabla  \eta  u^2 dxdt\\
				\ge&-Cs^2\lambda^2 \int_{0}^{T}\int_\om \xi^3 u^2 dxdt
				-Cs^2\lambda^2 \iint_Q \xi^3 \left|  A \nabla  \eta \cdot \nabla \eta\right| ^2 u^2 dxdt,
			\end{split}
		\end{equation*}
		and
		\begin{equation*}
			\begin{split}
				T_{17}
				=&s^2\lambda \iint_Q \xi  \sigma_t \Div( A \nabla  \eta)  u^2 dxdt\\
				\ge&-Cs^2\lambda \int_{0}^{T}\int_\om \xi^3 u^2 dxdt
				-Cs^2\lambda \iint_Q \xi^3 \left|  A \nabla  \eta \cdot \nabla \eta\right| ^2 u^2 dxdt.
			\end{split}
		\end{equation*}
		Thus, we have
		\begin{equation*}
			\begin{split}
				\left((P_1 u)_2, (P_2 u)_3\right)_{L^2(Q)}
				\ge&-Cs^2\lambda^2 \int_{0}^{T}\int_\om \xi^3 u^2 dxdt
				-Cs^2\lambda^2 \iint_Q \xi^3 \left|  A \nabla  \eta \cdot \nabla \eta\right| ^2 u^2 dxdt.
			\end{split}
		\end{equation*}
		Denote 
		\begin{equation*}
			\begin{split}
				T:=&\left((P_1 u)_3, (P_2 u)_3\right)_{L^2(Q)}
				= s\iint_{Q}\sigma_t u_t u dx dt = -\frac{1}{2} s \iint_{Q}\sigma_{tt} u^2 dx dt\,. 
			\end{split}
		\end{equation*}
		Then 
		\begin{equation*}
			\begin{split}
				T_{18}\ge&-C s \iint_{Q}\xi^3 u^2 dx dt\\
				\ge&-Cs \int_{0}^{T}\int_\om \xi^3 u^2 dxdt
				-Cs \iint_Q \xi^3 \left|  A \nabla  \eta \cdot \nabla \eta\right| ^2 u^2 dxdt,
			\end{split}
		\end{equation*}
		since $\sigma_{t t}\le C\xi^3$.
		
		Now, we can deduce   that for sufficiently large $\lambda$ and $s$
		\begin{equation}\label{P1P23}
			\begin{split}
				\left((P_1 u), (P_2 u)_3\right)_{L^2(Q)}=&\left((P_1 u)_1, (P_2 u)_3\right)_{L^2(Q)} + \left((P_1 u)_2, (P_2 u)_3\right)_{L^2(Q)}+\left((P_1 u)_3, (P_2 u)_3\right)_{L^2(Q)}\\
				\ge&-Cs^2\lambda^3\int_{0}^{T}\int_\om \xi^3 u^2 dxdt
				-Cs^2\lambda^3\iint_Q \xi^3 \left|  A \nabla  \eta \cdot \nabla \eta\right| ^2 u^2 dxdt.
			\end{split}
		\end{equation}
		Taking into account \eqref{P1P21}, \eqref{P1P22}, and \eqref{P1P23} yields 
		\begin{equation*}
			\begin{split}
				\left( P_1 u, P_2 u \right)_{L^2(Q)}
				\ge&Cs^3\lambda^4\iint_{Q} \xi^3 \left|  A \nabla  \eta \cdot \nabla \eta\right| ^2 u^2 dxdt
				+Cs \lambda^2 \iint_Q \xi  \left|  A \nabla  u \cdot \nabla \eta\right| ^2  dx  dt\\
				&\qquad +C s\lambda^2 \iint_Q \xi   A \nabla \eta \cdot \nabla \eta  A \nabla u \cdot \nabla u dx  dt
				-Cs^3\lambda^4\int_{0}^{T}\int_\om \xi^3 u^2 dxdt\\
				&\qquad  -Cs \lambda \int_{0}^{T}\int_\om \xi   A\nabla u  \cdot \nabla u dx dt.
			\end{split}
		\end{equation*}
		From \eqref{PPP}, for sufficiently large $s$ and $\lambda$  it follows 
		\begin{equation*}
			\begin{split}
				&\left\| P_1 u \right\| ^2_{L^2(Q)} + \left\| P_2 u \right\| ^2_{L^2(Q)}
				+Cs^3\lambda^4\iint_Q \xi^3 \left|  A \nabla  \eta \cdot \nabla \eta\right| ^2 u^2 dxdt\\
				&\qquad +Cs \lambda^2 \iint_Q \xi  \left|  A \nabla  u \cdot \nabla \eta\right| ^2  dx  dt
				+C s\lambda^2 \iint_Q \xi   A \nabla \eta \cdot \nabla \eta  A \nabla u \cdot \nabla u dx  dt\\
				\le& \left\| f_{s,\lambda} \right\| ^2_{L^2(Q)}+ Cs^3\lambda^4\int_{0}^{T}\int_\om \xi^3 u^2 dxdt	
				+Cs \lambda \int_{0}^{T}\int_\om \xi   A\nabla u  \cdot \nabla u dx dt\\
				\le& \left\| e^{-s\sigma} f \right\| ^2_{L^2(Q)}
				+ s^2\lambda^4\iint_{Q} \xi^3 \left|  A \nabla  \eta \cdot \nabla \eta\right| ^2 u^2 dxdt
				+s^2\lambda^2\iint_{Q} \xi^3 \left| \Div( A \nabla  \eta)\right| ^2 u^2 dxdt\\
				&\qquad  + Cs^3\lambda^4\int_{0}^{T}\int_\om \xi^3 u^2 dxdt	
				+Cs \lambda \int_{0}^{T}\int_\om \xi   A\nabla u  \cdot \nabla u dx dt.
			\end{split}
		\end{equation*}
		Thus, we   have
		\begin{equation}\label{P1P2}
			\begin{split}
				\left\| P_1 u \right\| ^2_{L^2(Q)} +& \left\| P_2 u \right\| ^2_{L^2(Q)}
				+Cs^3\lambda^4\iint_Q \xi^3 \left|  A \nabla  \eta \cdot \nabla \eta\right| ^2 u^2 dxdt\\
				&\quad  +Cs \lambda^2 \iint_Q \xi  \left|  A \nabla  u \cdot \nabla \eta\right| ^2  dx  dt
				+C s\lambda^2 \iint_Q \xi   A \nabla \eta \cdot \nabla \eta  A \nabla u \cdot \nabla u dx  dt\\
				\le& \left\| e^{-s\sigma} f \right\| ^2_{L^2(Q)}
				+ Cs^3\lambda^4\int_{0}^{T}\int_\om \xi^3 u^2 dxdt	
				+Cs \lambda \int_{0}^{T}\int_\om \xi   A\nabla u  \cdot \nabla u dx dt.
			\end{split}
		\end{equation}
		The final step will be to add integrals of $\left|  \Div( A \nabla  u)\right|  ^2$ and $\left|  u_t\right|  ^2$ on  the left-hand side of \eqref{P1P2}. This can be made using the expressions
		of $P_1 u$ and $P_2 u$. Indeed, from item $(iii)$ we have
		\begin{equation*}
			\begin{split}
				&s^{-1} \iint_Q \xi^{-1}\left|u_t\right|^2dx  d t\\
				&\qquad=s^{-1} \iint_Q \xi^{-1}(P_1 u + 2s \lambda^2 \xi u  A \nabla  \eta \cdot \nabla \eta   + 2s \lambda \xi   A \nabla  u \cdot \nabla \eta)^2dx  d t\\
				&\qquad\le  Cs^{-1}\left\|P_1 u\right\|^2+ C s\lambda^4\iint_Q \xi \left|  A \nabla  \eta \cdot \nabla \eta\right| ^2 u^2 dxdt
				+C s\lambda^2\iint_Q \xi \left|  A \nabla  u \cdot \nabla \eta\right| ^2 dxdt,
			\end{split}
		\end{equation*}
		and
		\begin{equation*}
			\begin{split}
				&s^{-1}   \iint_Q \xi^{-1}\left|\Div( A\nabla u)\right|^2dx  d t\\
				&\qquad =  s^{-1}   \iint_Q \xi^{-1}(P_2 u -s^2\lambda^2\xi^2 u  A \nabla \eta \cdot  \nabla \eta - s \sigma_t u)^2dx  d t\\
				&\qquad  \le  C s^{-1} \left\|P_2 u\right\|^2+C s^3 \lambda^4   \iint_Q \xi^3|  A \nabla \eta\cdot \nabla \eta |^2|u|^2dx  d t +C sT^2   \iint_Q \xi^{3}|u|^2dx  d t.
			\end{split}
		\end{equation*}
		Combining with  \eqref{P1P2} yields 
		\begin{equation}\label{P1UP2U}
			\begin{split}
				Cs^{-1} &\iint_Q \xi^{-1}\left|u_t\right|^2dx  d t +Cs^{-1}   \iint_Q \xi^{-1}\left|\Div( A\nabla u)\right|^2dx  d t\\
				&\qquad +Cs^3\lambda^4\iint_Q \xi^3 \left|  A \nabla  \eta \cdot \nabla \eta\right| ^2 u^2 dxdt
				+Cs \lambda^2 \iint_Q \xi  \left|  A \nabla  u \cdot \nabla \eta\right| ^2  dx  dt\\
				&\qquad+C s\lambda^2 \iint_Q \xi  A \nabla \eta \cdot \nabla \eta  A \nabla u \cdot \nabla u dx  dt\\
				\le& \left\| e^{-s\sigma} f \right\| ^2_{L^2(Q)}
				+ Cs^3\lambda^4\int_{0}^{T}\int_\om \xi^3 u^2 dxdt	
				+Cs \lambda \int_{0}^{T}\int_\om \xi   A\nabla u  \cdot \nabla u dx dt,
			\end{split}
		\end{equation}
		for sufficiently large $s$ and $\lambda$.
		It is worth noting that 
		\begin{equation*}
			\begin{split}
				C s\lambda^2 \iint_Q &\xi   A \nabla \eta \cdot \nabla \eta  A \nabla u \cdot \nabla u dx  dt\\
				=&C s\lambda^2 \int_0^T \int_{\Om\backslash\om} \xi   A \nabla \eta \cdot \nabla \eta  A \nabla u \cdot \nabla u dx  dt\\
				&\qquad+ C s\lambda^2 \int_0^T \int_{\om} \xi   A \nabla \eta \cdot \nabla \eta  A \nabla u \cdot \nabla u dx  dt\\
				\ge& C s\lambda^2 \int_0^T \int_{\Om\backslash\om} \xi    A \nabla u \cdot \nabla u dx  dt + C s\lambda^2 \int_0^T \int_{\om} \xi   A \nabla \eta \cdot \nabla \eta  A \nabla u \cdot \nabla u dx  dt\\
				\ge&C s\lambda^2 \iint_Q \xi    A \nabla u \cdot \nabla u dx  dt - C s\lambda^2 \int_0^T \int_{\om} \xi   A \nabla u \cdot \nabla u dx  dt,
			\end{split}
		\end{equation*}
		and
		\begin{equation*}
			\begin{split}
				Cs^3\lambda^4\iint_Q &\xi^3 \left|  A \nabla  \eta \cdot \nabla \eta\right| ^2 u^2 dxdt\\
				\ge&Cs^3\lambda^4\iint_Q \xi^3  u^2 dxdt -Cs^3\lambda^4\int_0^T \int_{\om} \xi^3  u^2 dxdt.
			\end{split}
		\end{equation*}
		Hence, \eqref{P1UP2U}  can be written as
		\begin{equation}
			\begin{split}
				Cs^{-1} \iint_Q& \xi^{-1}\left|u_t\right|^2dx  d t +Cs^{-1}   \iint_Q \xi^{-1}\left|\Div( A\nabla u)\right|^2dx  d t\\
				&\qquad+Cs^3\lambda^4\iint_Q \xi^3  u^2 dxdt
				+Cs \lambda^2 \iint_Q \xi  \left|  A \nabla  u \cdot \nabla \eta\right| ^2  dx  dt\\
				&\qquad+C s\lambda^2 \iint_Q \xi   A \nabla u \cdot \nabla u dx  dt\\
				\le& \left\| e^{-s\sigma} f \right\| ^2_{L^2(Q)}
				+ Cs^3\lambda^4\int_{0}^{T}\int_\om \xi^3 u^2 dxdt	
				+Cs \lambda^2 \int_{0}^{T}\int_\om \xi   A\nabla u  \cdot \nabla u dx dt\,. 
			\end{split}
		\end{equation}
		We are now going to  bound the second integral on the right-hand side. To this end, let us introduce a function $\Phi=\Phi(x)$, with
		\begin{equation*}
			\Phi\in C_0^2(\omega_0), \quad 0\le \Phi\le 1,\quad \Phi\equiv 1 \mbox{ in } \omega, \quad \Phi\equiv 0 \mbox{ in } \Om\backslash\om_0, 
		\end{equation*}
		and let us make some computations. We have
		\begin{equation*}
			\begin{split}
				s \lambda^2 \int_{0}^{T}\int_\om \xi  A \nabla u \cdot \nabla udx  d t\le s \lambda^2 \int_{0}^{T}\int_{\om_0} \Phi \xi  A \nabla u \cdot \nabla udx  d t,
			\end{split}
		\end{equation*}
		and
		\begin{equation*}
			\begin{split}
				s \lambda^2 \int_{0}^{T}&\int_{\om_0}\Phi \xi  A \nabla u \cdot \nabla udx  d t\\
				=&- s \lambda^2 \int_{0}^{T}\int_{\om_0} \Div(\Phi\xi  A\nabla u) udx  d t\\
				=&- s \lambda^3 \int_{0}^{T}\int_{\om_0} \Phi\xi u  A\nabla u \cdot \nabla \eta dx  d t - s \lambda^2 \int_{0}^{T}\int_{\om_0}\Phi \xi\Div(  A\nabla u) udx  d t\\
				&\qquad- s \lambda^2 \int_{0}^{T}\int_{\om_0}  \xi  A\nabla u\cdot\nabla\Phi udx  d t
				\\
				\le& C \lambda^2 \int_{0}^{T}\int_{\om_0} \xi \left|  A \nabla  u \cdot \nabla \eta\right| ^2 dx  d t + Cs^2 \lambda^4 \int_{0}^{T}\int_{\om_0} \xi u^2dx  d t\\
				&\qquad+C \epsilon s^{-1}  \int_{0}^{T}\int_{\om_0} \xi^{-1} \left| \Div(  A\nabla u)\right| ^2dx  d t + C \frac{1}{\epsilon}s^3 \lambda^4 \int_{0}^{T}\int_{\om_0} \xi^3 u^2dx  d t\\
				&\qquad+C \int_{0}^{T}\int_{\om_0}   A \nabla u \cdot \nabla u  dx  d t + Cs^2 \lambda^4 \int_{0}^{T}\int_{\om_0}  \xi^2 u^2 dx  d t.
			\end{split}
		\end{equation*}
		Thus, for $\epsilon$ that is small enough, the term $s \lambda^2 \int_{0}^{T}\int_{\om} \xi  A \nabla u \cdot \nabla udx d t$ can be absorbed by the other terms. 
		
		This implies that for sufficiently large $s$ and $\lambda$, we have
		\begin{equation*}
			\begin{split}
				Cs^{-1} \iint_Q & \xi^{-1}\left|u_t\right|^2dx  d t +Cs^{-1}   \iint_Q \xi^{-1}\left|\Div( A\nabla u)\right|^2dx  d t\\
				&\qquad+Cs^3\lambda^4\iint_Q \xi^3  u^2 dxdt
				+Cs \lambda^2 \iint_Q \xi  \left|  A \nabla  u \cdot \nabla \eta\right| ^2  dx  dt\\
				&\qquad+C s\lambda^2 \iint_Q \xi    A \nabla u \cdot \nabla u dx  dt\\
				\le& \left\| e^{-s\sigma} f \right\| ^2_{L^2(Q)}
				+ Cs^3\lambda^4\int_{0}^{T}\int_{\om_0} \xi^3 u^2 dxdt.
			\end{split}
		\end{equation*}
		By replacing $u$ with $e^{-s\sigma}w$, we can then revert back to the original variable $w$ and conclude the result.
		\begin{equation*}
			\begin{split}
				Cs^{-1} \iint_Q & e^{-2s\sigma}\xi^{-1}\left|w_t\right|^2dx  d t +Cs^{-1}   \iint_Q e^{-2s\sigma}\xi^{-1}\left|\Div( A\nabla w)\right|^2dx  d t\\
				&\qquad+Cs^3\lambda^4\iint_Q e^{-2s\sigma}\xi^3  w^2 dxdt
				+Cs \lambda^2 \iint_Q e^{-2s\sigma}\xi  \left|  A \nabla  w \cdot \nabla \eta\right| ^2  dx  dt\\
				&\qquad+C s\lambda^2 \iint_Q e^{-2s\sigma}\xi    A \nabla w \cdot \nabla w dx  dt\\
				\le& \left\| e^{-s\sigma} f \right\| ^2_{L^2(Q)}
				+ Cs^3\lambda^4\int_{0}^{T}\int_{\om_0} e^{-2s\sigma}\xi^3 w^2 dxdt.
			\end{split}
		\end{equation*}
		This completes the proof of Theorem \ref{TH2}.  
	\end{proof}
	
	Next, we turn to prove Carleman estimate for Neumann boundary condition.  It is more complex. 
	
	\begin{proof}[Proof of Theorem \ref{TH22}.]  
		Recall  
		\begin{equation*}
			\begin{split}
				& \theta(t):=[t(T-t)]^{-4}, \quad \xi(x, t):=\theta(t) e^{ \lambda(8|\eta|_\infty+\eta (x))}, \quad \sigma(x, t):=\theta(t) e^{10 \lambda|\eta|_\infty}-\xi(x, t).
			\end{split}
		\end{equation*}
		For the same $\eta$ defined above, consider 
		\begin{equation*}
			\begin{split}
				& \widetilde{\xi}(x, t):=\theta(t) e^{ \lambda(8|\eta|_\infty-\eta (x))}, \quad \wt{\sigma}(x, t):=\theta(t) e^{10 \lambda|\eta|_\infty}-\wt{\xi}(x, t) 
			\end{split}
		\end{equation*}
		and 
		\begin{equation}\label{Defineu}
			\wt{u}(x,t)=e^{-s\wt{\sigma}(x,t)} w(x,t). 
		\end{equation}
		As previously,    we see:
		\begin{itemize}
			\item [($i$)] $\wt{u}=\frac{\partial \wt{u}}{\partial x_i}=0$ at $t=0$ and $t=T$; 
			\item [($ii$)] Let
			\begin{equation*}
				\begin{split}
					P_1 \wt{u}
					&:=-2s\lambda^2\xi \wt{u}  A \nabla \eta\cdot \nabla \eta+ 2s\lambda\xi   A \nabla \wt{u}\cdot \nabla \eta +\wt{u}_t=:(P_1\wt{u})_1+(P_1\wt{u})_2+(P_1\wt{u})_3,\\
					P_2 \wt{u}
					&:=s^2\lambda^2\xi^2 \wt{u}  A \nabla \eta\cdot \nabla \eta+\Div( A \nabla \wt{u})+s \wt{\sigma}_t \wt{u}=:(P_2\wt{u})_1+(P_2\wt{u})_2+(P_2\wt{u})_3,\\
					\wt{f}_{s,\lambda}
					&:=e^{-s \wt{\sigma}} f - s\lambda^2\xi \wt{u}  A \nabla \eta\cdot \nabla \eta - s\lambda \xi \wt{u}\Div( A \nabla \eta) .
				\end{split}
			\end{equation*}
			Then 
			\begin{equation*}
				P_1 \wt{u}+P_2 \wt{u}=\wt{f}_{s,\lambda}.
			\end{equation*}
		\end{itemize}
		Similar to the calculations in   Dirichlet boundary condition case, we have 
		\begin{equation*}
			\begin{split}
				\left( P_1 u, P_2 u \right)_{L^2(Q)}
				\ge&Cs^3\lambda^4\iint_{Q} \xi^3 \left|  A \nabla  \eta \cdot \nabla \eta\right| ^2 u^2 dxdt
				+Cs \lambda^2 \iint_Q \xi  \left|  A \nabla  u \cdot \nabla \eta\right| ^2  dx  dt\\
				&\qquad +C s\lambda^2 \iint_Q \xi   A \nabla \eta \cdot \nabla \eta  A \nabla u \cdot \nabla u dx  dt
				-Cs^3\lambda^4\int_{0}^{T}\int_\om \xi^3 u^2 dxdt\\
				&\qquad  -Cs \lambda \int_{0}^{T}\int_\om \xi   A\nabla u  \cdot \nabla u dx dt
				+B_1 +B_2 +B_3 +B_4 +B_5 +B_6 			
			\end{split}
		\end{equation*}
		and 
		\begin{equation*}
			\begin{split}
				\left( P_1 \wt{u}, P_2 \wt{u} \right)_{L^2(Q)}
				\ge&Cs^3\lambda^4\iint_{Q} \wt{\xi}^3 \left|  A \nabla  \eta \cdot \nabla \eta\right| ^2 \wt{u}^2 dxdt
				+Cs \lambda^2 \iint_Q \wt{\xi}  \left|  A \nabla  \wt{u} \cdot \nabla \eta\right| ^2  dx  dt\\
				&\qquad +C s\lambda^2 \iint_Q \wt{\xi}   A \nabla \eta \cdot \nabla \eta  A \nabla \wt{u} \cdot \nabla \wt{u} dx  dt
				-Cs^3\lambda^4\int_{0}^{T}\int_\om \wt{\xi}^3 \wt{u}^2 dxdt\\
				&\qquad  -Cs \lambda \int_{0}^{T}\int_\om \wt{\xi}   A\nabla \wt{u}  \cdot \nabla \wt{u} dx dt
				+\wt{B}_1 +\wt{B}_2 +\wt{B}_3 +\wt{B}_4 +\wt{B}_5 +\wt{B}_6.	
			\end{split}
		\end{equation*}
		Here, $\wt{B}_1\cdots \wt{B}_6$ denote the boundary terms in the calculations of $\left( P_1 \wt{u}, P_2 \wt{u} \right)_{L^2(Q)}$,
		\begin{equation*}
			\begin{split}
				\wt{B}_1
				=&s^3 \lambda^3 \int_{0}^{T}\int_{\partial\hat{\Om}} \wt{\xi}^3 \wt{u}^2 (A \nabla  \eta \cdot \nabla \eta)   (A  \nabla \eta \cdot \nu)  ds  dt\\
				\wt{B}_2
				=&-2s \lambda^2 \int_{0}^{T} \int_{\partial\hat{\Om}} \wt{\xi} \wt{u}  (A \nabla \eta \cdot \nabla \eta)  ( A \nabla \wt{u} \cdot \nu) ds  dt\\
				\wt{B}_3
				=&2s \lambda \int_{0}^{T}\int_{\partial\hat{\Om}} \wt{\xi} \left(  A\nabla \wt{u}\cdot \nabla \eta \right) (A \nabla \wt{u} \cdot \nu) ds dt\\
				\wt{B}_4
				=&-s \lambda \int_{0}^{T}\int_{\partial\hat{\Om}} \wt{\xi} \left(  A\nabla \wt{u}\cdot \nabla \wt{u} \right) (A\nabla \eta \cdot \nu) ds dt\\
				\wt{B}_5
				=&\iint_\Sigma \wt{u}_t  (A\nabla \wt{u} \cdot \nu) ds dt\\
				\wt{B}_6
				=&-s^2\lambda \int_{0}^{T}\int_{\partial\hat{\Om}} \wt{u}^2 \wt{\xi}  \sigma_t  (A \nabla  \eta \cdot \nu) dsdt.						
			\end{split}
		\end{equation*}
		Replacing $u$ by $e^{-s\sigma}w$ and $\wt{u}$ by $e^{-s\wt{\sigma}}w$ in $B_1$ and $\wt{B}_1$, we see 
		\begin{equation*}
			\begin{split}
				B_1
				=&-s^3 \lambda^3 \int_{0}^{T}\int_{\partial\hat{\Om}} \xi^3 u^2 (A \nabla  \eta \cdot \nabla \eta)   (A  \nabla \eta \cdot \nu)  ds  dt\\
				=&-s^3 \lambda^3 \int_{0}^{T}\int_{\partial\hat{\Om}} \xi^3 u^2 (|\nabla \eta|^2 A\nu \cdot \nu) (-|\nabla \eta|A\nu \cdot \nu) ds  dt\\
				=&s^3 \lambda^3 \int_{0}^{T}\int_{\partial\hat{\Om}} \xi^3 e^{-2s\sigma} w^2 |\nabla \eta|^3 (A\nu \cdot \nu)^2 ds  dt\\
				\ge&0.
			\end{split}
		\end{equation*}
		Note that $\eta=0$ on $\partial\hat{\Om}$, we have $u=\wt{u}$, $\xi=\wt{\xi}$ and $\sigma=\wt{\sigma}$ on $\partial \hat{\Om}$, from which we deduce $\wt{B}_1=-B_1$ and $\wt{B}_1 + B_1 =0$. We already know $A\nabla w \cdot \nu=0$ on $\partial \Omega$ and  $\nabla \eta=0$ on $\partial \hat{\Om}- \partial \hat{\Om}\cap \partial \Om$. In addition, we have $\nabla u = s \lambda e^{-s\sigma} \xi w \nabla \eta + e^{-s\sigma} \nabla w$ and $\nabla \wt{u} = -s \lambda e^{-s\sigma} \xi w \nabla \eta + e^{-s\sigma} \nabla w$. Thus,      we have  for $B_2$ and $\wt{B}_2$, 
		\begin{equation*}
			\begin{split}
				B_2
				=&-2s \lambda^2 \int_{0}^{T}\int_{\partial\hat{\Om}} \xi u (A \nabla  \eta \cdot \nabla \eta)   (A  \nabla u \cdot \nu)  ds  dt\\
				=&2s^2 \lambda^3 \int_{0}^{T}\int_{\partial\hat{\Om}} \xi^2 e^{-2s\sigma}w^2 |\nabla \eta|^3 (A\nu\cdot\nu)^2  ds  dt\\
				&\qquad -2s \lambda^2 \int_{0}^{T}\int_{\partial\hat{\Om}} \xi e^{-2s\sigma}w |\nabla \eta|^2 (A\nu\cdot\nu) (A\nabla w\cdot \nu)  ds  dt\\
				=&2s^2 \lambda^3 \int_{0}^{T}\int_{\partial\hat{\Om}} \xi^2 e^{-2s\sigma}w^2 |\nabla \eta|^3 (A\nu\cdot\nu)^2  ds  dt,
			\end{split}
		\end{equation*}
		and
		\begin{equation*}
			\begin{split}
				\wt{B}_2
				=&-2s \lambda^2 \int_{0}^{T}\int_{\partial\hat{\Om}} \xi \wt{u} (A \nabla  \eta \cdot \nabla \eta)   (A  \nabla \wt{u} \cdot \nu)  ds  dt\\
				=&-2s^2 \lambda^3 \int_{0}^{T}\int_{\partial\hat{\Om}} \xi^2 e^{-2s\sigma}w^2 |\nabla \eta|^3 (A\nu\cdot\nu)^2  ds  dt\\
				&\qquad -2s \lambda^2 \int_{0}^{T}\int_{\partial\hat{\Om}} \xi e^{-2s\sigma}w |\nabla \eta|^2 (A\nu\cdot\nu) (A\nabla w\cdot \nu)  ds  dt\\
				=&-2s^2 \lambda^3 \int_{0}^{T}\int_{\partial\hat{\Om}} \xi^2 e^{-2s\sigma}w^2 |\nabla \eta|^3 (A\nu\cdot\nu)^2  ds  dt.
			\end{split}
		\end{equation*}
		Clearly,   $B_2 + \wt{B}_2$=0. Similarly,
		\begin{equation*}
			\begin{split}
				B_3
				=&-2s \lambda \int_{0}^{T}\int_{\partial\hat{\Om}} \xi \left(  A\nabla u\cdot \nabla \eta \right) (A \nabla u \cdot \nu) ds dt\\
				=&2s^3 \lambda^3 \int_{0}^{T}\int_{\partial\hat{\Om}} \xi^3 e^{-2s\sigma}w^2 |\nabla \eta|^3 (A\nu\cdot\nu)^2  ds  dt\\
				&\qquad + 2s \lambda \int_{0}^{T}\int_{\partial\hat{\Om}} \xi e^{-2s\sigma} |\nabla \eta| (A\nabla w \cdot \nu)^2  ds  dt\\
				&\qquad - 4s^2 \lambda^2 \int_{0}^{T}\int_{\partial\hat{\Om}} \xi^2 e^{-2s\sigma} |\nabla \eta|^2 w (A\nabla w \cdot \nu) (A\nu\cdot\nu) ds  dt\\
				=&2s^3 \lambda^3 \int_{0}^{T}\int_{\partial\hat{\Om}} \xi^3 e^{-2s\sigma}w^2 |\nabla \eta|^3 (A\nu\cdot\nu)^2  ds  dt,
			\end{split}
		\end{equation*}
		and
		\begin{equation*}
			\begin{split}
				\wt{B}_3
				=&2s \lambda \int_{0}^{T}\int_{\partial\hat{\Om}} \xi \left(  A\nabla \wt{u}\cdot \nabla \eta \right) (A \nabla \wt{u} \cdot \nu) ds dt\\
				=&-2s^3 \lambda^3 \int_{0}^{T}\int_{\partial\hat{\Om}} \xi^3 e^{-2s\sigma}w^2 |\nabla \eta|^3 (A\nu\cdot\nu)^2  ds  dt.
			\end{split}
		\end{equation*}
		We also have $B_3 + \wt{B}_3$=0. Moreover,
		\begin{equation*}
			\begin{split}
				B_4
				=&s \lambda \int_{0}^{T}\int_{\partial\hat{\Om}} \xi \left(  A\nabla u\cdot \nabla u \right) (A\nabla \eta \cdot \nu) ds dt\\
				=&-s^3 \lambda^3 \int_{0}^{T}\int_{\partial\hat{\Om}} \xi^3 e^{-2s\sigma} w^2 |\nabla \eta|^3 (A\nu \cdot \nu)^2 ds dt\\
				&\qquad -s \lambda \int_{0}^{T}\int_{\partial\hat{\Om}} \xi e^{-2s\sigma}  |\nabla \eta| (A\nabla w \cdot \nabla w) (A\nu \cdot \nu) ds dt\\
				&\qquad +2s^2 \lambda^2 \int_{0}^{T}\int_{\partial\hat{\Om}} \xi^2 e^{-2s\sigma} w |\nabla \eta|^2 (A\nabla w \cdot \nu) (A\nu \cdot \nu) ds dt\\
				=&-s^3 \lambda^3 \int_{0}^{T}\int_{\partial\hat{\Om}} \xi^3 e^{-2s\sigma} w^2 |\nabla \eta|^3 (A\nu \cdot \nu)^2 ds dt\\
				&\qquad -s \lambda \int_{0}^{T}\int_{\partial\hat{\Om}} \xi e^{-2s\sigma}  |\nabla \eta| (A\nabla w \cdot \nabla w) (A\nu \cdot \nu) ds dt,
			\end{split}
		\end{equation*}
		and
		\begin{equation*}
			\begin{split}
				\wt{B}_4
				=&-s \lambda \int_{0}^{T}\int_{\partial\hat{\Om}} \xi \left(  A\nabla u\cdot \nabla u \right) (A\nabla \eta \cdot \nu) ds dt\\
				=&+s^3 \lambda^3 \int_{0}^{T}\int_{\partial\hat{\Om}} \xi^3 e^{-2s\sigma} w^2 |\nabla \eta|^3 (A\nu \cdot \nu)^2 ds dt\\
				&\qquad +s \lambda \int_{0}^{T}\int_{\partial\hat{\Om}} \xi e^{-2s\sigma}  |\nabla \eta| (A\nabla w \cdot \nabla w) (A\nu \cdot \nu) ds dt.
			\end{split}
		\end{equation*}
		Consequently, $B_4 +\wt{B}_4=0$. Finally, it easy to deduce
		\begin{equation*}
			\begin{split}
				B_5 + \wt{B}_5 =0.
			\end{split}
		\end{equation*}
		and
		\begin{equation*}
			\begin{split}
				B_6 + \wt{B}_6 =0.
			\end{split}
		\end{equation*}
		Summarizing the above  we have 
		$$
		B_1 +\cdots +B_6 +\wt{B}_1 +\cdots + \wt{B}_6 = 0.
		$$
		Hence,
		\begin{equation*}
			\begin{split}
				&\left( P_1 u,  P_2 u \right)_{L^2(Q)} + \left( P_1 \wt{u}, P_2 \wt{u} \right)_{L^2(Q)}\\
				&\qquad\ge Cs^3\lambda^4\iint_{Q} \xi^3 \left|  A \nabla  \eta \cdot \nabla \eta\right| ^2 u^2 dxdt
				+Cs \lambda^2 \iint_Q \xi  \left|  A \nabla  u \cdot \nabla \eta\right| ^2  dx  dt\\
				&\qquad\qquad +C s\lambda^2 \iint_Q \xi   A \nabla \eta \cdot \nabla \eta  A \nabla u \cdot \nabla u dx  dt
				-Cs^3\lambda^4\int_{0}^{T}\int_\om \xi^3 u^2 dxdt\\
				&\qquad\qquad  -Cs \lambda \int_{0}^{T}\int_\om \xi   A\nabla u  \cdot \nabla u dx dt\\
				&\qquad\qquad +Cs^3\lambda^4\iint_{Q} \wt{\xi}^3 \left|  A \nabla  \eta \cdot \nabla \eta\right| ^2 \wt{u}^2 dxdt
				+Cs \lambda^2 \iint_Q \wt{\xi}  \left|  A \nabla  \wt{u} \cdot \nabla \eta\right| ^2  dx  dt\\
				&\qquad\qquad +C s\lambda^2 \iint_Q \wt{\xi}   A \nabla \eta \cdot \nabla \eta  A \nabla \wt{u} \cdot \nabla \wt{u} dx  dt
				-Cs^3\lambda^4\int_{0}^{T}\int_\om \wt{\xi}^3 \wt{u}^2 dxdt\\
				&\qquad\qquad  -Cs \lambda \int_{0}^{T}\int_\om \wt{\xi}   A\nabla \wt{u}  \cdot \nabla \wt{u} dx dt.			
			\end{split}
		\end{equation*}
		We also have
		\begin{equation*}
			\begin{split}
				\left\| P_1 u \right\| ^2_{L^2(Q)} +& \left\| P_2 u \right\| ^2_{L^2(Q)} +\left\| P_1 \wt{u} \right\| ^2_{L^2(Q)} + \left\| P_2 \wt{u} \right\| ^2_{L^2(Q)}\\
				&\quad +Cs^3\lambda^4\iint_Q \xi^3 \left|  A \nabla  \eta \cdot \nabla \eta\right| ^2 u^2 dxdt
				+Cs \lambda^2 \iint_Q \xi  \left|  A \nabla  u \cdot \nabla \eta\right| ^2  dx  dt\\
				&\quad+C s\lambda^2 \iint_Q \xi   A \nabla \eta \cdot \nabla \eta  A \nabla u \cdot \nabla u dx  dt\\
				&\quad +Cs^3\lambda^4\iint_Q \wt{\xi}^3 \left|  A \nabla  \eta \cdot \nabla \eta\right| ^2 \wt{u}^2 dxdt
				+Cs \lambda^2 \iint_Q \wt{\xi}  \left|  A \nabla  \wt{u} \cdot \nabla \eta\right| ^2  dx  dt\\
				&\quad+C s\lambda^2 \iint_Q \wt{\xi}   A \nabla \eta \cdot \nabla \eta  A \nabla \wt{u} \cdot \nabla \wt{u} dx  dt\\
				\le& \left\| e^{-s\sigma} f \right\| ^2_{L^2(Q)}
				+ Cs^3\lambda^4\int_{0}^{T}\int_\om \xi^3 u^2 dxdt	
				+Cs \lambda \int_{0}^{T}\int_\om \xi   A\nabla u  \cdot \nabla u dx dt\\
				&\quad+\left\| e^{-s\wt{\sigma}} f \right\| ^2_{L^2(Q)}+ Cs^3\lambda^4\int_{0}^{T}\int_\om \wt{\xi}^3 \wt{u}^2 dxdt	
				+Cs \lambda \int_{0}^{T}\int_\om \wt{\xi}   A\nabla \wt{u}  \cdot \nabla \wt{u} dx dt.
			\end{split}
		\end{equation*}
		With  a similar  calculation as before, we   obtain
		\begin{equation}\label{PP4}
			\begin{split}
				Cs^{-1} \iint_Q & \xi^{-1}\left|u_t\right|^2dx  d t +Cs^{-1}   \iint_Q \xi^{-1}\left|\Div( A\nabla u)\right|^2dx  d t\\
				&\qquad+Cs^3\lambda^4\iint_Q \xi^3  u^2 dxdt
				+Cs \lambda^2 \iint_Q \xi  \left|  A \nabla  u \cdot \nabla \eta\right| ^2  dx  dt\\
				&\qquad+C s\lambda^2 \iint_Q \xi    A \nabla u \cdot \nabla u dx  dt\\
				&\qquad+Cs^{-1} \iint_Q  \wt{\xi}^{-1}\left|\wt{u}_t\right|^2dx  d t +Cs^{-1}   \iint_Q \wt{\xi}^{-1}\left|\Div( A\nabla \wt{u})\right|^2dx  d t\\
				&\qquad+Cs^3\lambda^4\iint_Q \wt{\xi}^3  \wt{u}^2 dxdt
				+Cs \lambda^2 \iint_Q \wt{\xi}  \left|  A \nabla  \wt{u} \cdot \nabla \eta\right| ^2  dx  dt\\
				&\qquad+C s\lambda^2 \iint_Q \wt{\xi}    A \nabla \wt{u} \cdot \nabla \wt{u} dx  dt\\
				\le& \left\| e^{-s\sigma} f \right\| ^2_{L^2(Q)}
				+ Cs^3\lambda^4\int_{0}^{T}\int_{\om_0} \xi^3 u^2 dxdt\\
				&\qquad+\left\| e^{-s\wt{\sigma}} f \right\| ^2_{L^2(Q)}
				+ Cs^3\lambda^4\int_{0}^{T}\int_{\om_0} \wt{\xi}^3 \wt{u}^2 dxdt.
			\end{split}
		\end{equation}
		Replacing $u$ by $e^{-s\sigma}w$ and $\wt{u}$ by $e^{-s\wt{\sigma}}w$ in \eqref{PP4}, we get
		\begin{equation*}
			\begin{split}
				Cs^{-1} \iint_Q & e^{-2s\sigma}\xi^{-1}\left|w_t\right|^2dx  d t +Cs^{-1}   \iint_Q e^{-2s\sigma}\xi^{-1}\left|\Div( A\nabla w)\right|^2dx  d t\\
				&\qquad+Cs^3\lambda^4\iint_Q e^{-2s\sigma}\xi^3  w^2 dxdt
				+Cs \lambda^2 \iint_Q e^{-2s\sigma}\xi  \left|  A \nabla  w \cdot \nabla \eta\right| ^2  dx  dt\\
				&\qquad+C s\lambda^2 \iint_Q e^{-2s\sigma}\xi    A \nabla w \cdot \nabla w dx  dt
				+Cs^{-1} \iint_Q  e^{-2s\wt{\sigma}}\wt{\xi}^{-1}\left|w_t\right|^2dx  d t\\
				&\qquad+Cs^{-1}   \iint_Q e^{-2s\wt{\sigma}}\wt{\xi}^{-1}\left|\Div( A\nabla w)\right|^2dx  d t
				+Cs^3\lambda^4\iint_Q e^{-2s\wt{\sigma}}\wt{\xi}^3  w^2 dxdt\\
				&\qquad+Cs \lambda^2 \iint_Q e^{-2s\wt{\sigma}}\wt{\xi}  \left|  A \nabla  w \cdot \nabla \eta\right| ^2  dx  dt
				C s\lambda^2 \iint_Q e^{-2s\wt{\sigma}}\wt{\xi}    A \nabla w \cdot \nabla w dx  dt\\
				\le& \left\| e^{-s\sigma} f \right\| ^2_{L^2(Q)}
				+ Cs^3\lambda^4\int_{0}^{T}\int_{\om_0} e^{-2s\sigma}\xi^3 w^2 dxdt\\
				&\qquad+\left\| e^{-s\wt{\sigma}} f \right\| ^2_{L^2(Q)}
				+ Cs^3\lambda^4\int_{0}^{T}\int_{\om_0} e^{-2s\wt{\sigma}}\wt{\wt{\xi}}^3 w^2 dxdt.
			\end{split}
		\end{equation*}
		Since $\xi$ and $\wt{\xi}$ are bounded functions on $\Omega$, there exist different constants $C_1, C_2$ and $C_3$ such that
		\begin{equation*}
			C_1 |\xi| \le |\wt{\xi}| \le C_2 |\xi|, \ C_3 \frac{1}{|\xi|} \le \frac{1}{|\wt{\xi}|} \le C_2 \frac{1}{|\xi|}, \ \forall (x,t)\in Q.
		\end{equation*}
		Then we have
		\begin{equation*}
			\begin{split}
				C\iint_Q&  \left(  s^{-1} \xi^{-1} \left( |w_t|^2 + |\Div(A\nabla w)|^2\right)  + s\lambda^2 \xi|A\nabla w \cdot \nabla \eta|^2 + s\lambda^2 \xi A\nabla w \cdot \nabla w\right. \\
				&\left. \qquad+ s^3 \lambda^4 \xi^3 w^2 dx \right) \left(e^{2s\sigma} + e^{2s\wt{\sigma}} \right)    d t\\
				\le& C \iint_Q |f|^2\left(e^{2s\sigma} + e^{2s\wt{\sigma}} \right) dx dt 
				+ C\int_{0}^{T}\int_{\om_0} s^3\lambda^4\xi^3 w^2 \left(e^{2s\sigma} + e^{2s\wt{\sigma}} \right) dxdt.
			\end{split}
		\end{equation*}
		This completes the proof of Carleman inequality for Neumann  boundary condition. 
	\end{proof}
	
	The way that the Carleman estimate in Theorem \ref{TH2} leads to Theorem \ref{TH4} is standard. 
	
	\begin{proof}[{\bf Proof of Theorem \ref{TH4}}]	
		Below, we   prove the observability inequality only for   Neumann boundary condition, while a similar proof can be applied for  Dirichlet boundary condition. Firstly, multiply the equation in \eqref{3.1} by $w$, and  letting  $f=0$ yield   that, for all $t \ge 0$,
		\begin{equation*}
			\frac{d}{d t} \int_{\Omega} w(x, t)^2 d x=2 \iint_{Q} A(x) \nabla w \cdot \nabla w d x d t \ge 0,
		\end{equation*}
		Hence, the function
		\begin{equation*}
			t \mapsto \int_{\Omega} w(x, t)^2 d x
		\end{equation*}
		is nondecreasing and it follows that
		\begin{equation*}
			\int_{\Omega} w(x, 0)^2 d x \leq \frac{2}{T} \int_{T / 4}^{3 T / 4} \int_{\Omega} w(x, t)^2 d x d t.
		\end{equation*}
		Next, choosing  $s_0, \lambda_0$ and $\xi$   as in Theorem \ref{TH2}   we have
		\begin{equation*}
			\begin{split}
				& \int_{\Omega} w(x, 0)^2 d x \leq \frac{2}{T} \int_{T / 4}^{3 T / 4} \int_{\Omega} w(x, t)^2 d x d t \\
				& \leq \frac{2}{T} \frac{1}{\inf _{\left(\frac{T}{4}, \frac{3 T}{4}\right) \times \Omega} s^2 \lambda^2  \xi^2 \left(e^{2s\sigma} + e^{2s\wt{\sigma}} \right)  } \int_{T / 4}^{3 T / 4} \int_{\Omega} s^2 \lambda^2  \xi^2  \left(e^{2s\sigma} + e^{2s\wt{\sigma}} \right)   w(x, t)^2 d x d t \\
				& \leq \frac{2}{T} \frac{1}{\inf _{\left(\frac{T}{4}, \frac{3 T}{4}\right) \times \Omega} s^2 \lambda^2  \xi^2 \left(e^{2s\sigma} + e^{2s\wt{\sigma}} \right) } \int_{0}^{T} \int_{\Omega} s^2 \lambda^2  \xi^2  \left(e^{2s\sigma} + e^{2s\wt{\sigma}} \right)   w(x, t)^2 d x d t. 
			\end{split}
		\end{equation*}
		Using Theorem \ref{TH2},   there exists $C=C(\Omega, \omega_{0}, T)$ such that, for some fixed $\lambda \geq \lambda_0$ and $s \geq s_0$, we have
		$$
		\int_{0}^{T} \int_{\Omega} s^2 \lambda^2  \xi^2  \left(e^{2s\sigma} + e^{2s\wt{\sigma}} \right)   w(x, t)^2 d x d t 
		\le C s^3 \lambda^3 \int_0^T \int_{\omega_{0}} \left(e^{2s\sigma} + e^{2s\wt{\sigma}} \right)  \xi^3w^2 dx dt,
		$$
		and hence
		$$
		\begin{aligned}
			& \int_{\Omega} w(x, 0)^2 d x \\
			&\leq 
			\frac{C s^3 \lambda^3}{T \inf _{\left(\frac{T}{4}, \frac{3 T}{4}\right) \times \Omega} s^2 \lambda^2  \xi^2 \left(e^{2s\sigma} + e^{2s\wt{\sigma}} \right)}      \int_0^T \int_{\omega_{0}} \left(e^{2s\sigma} + e^{2s\wt{\sigma}} \right)  \xi^3 w^2 dx dt  \\
			&  \leq C(\Omega, \omega_{0}, T) \int_0^T \int_{\omega_{0}} w^2. \\
			&
		\end{aligned}
		$$
		This proves Theorem \ref{TH4} for Neumann boundary condition. 	Similarly, we can prove the theorem for  Dirichlet boundary condition.
	\end{proof}

	\vspace{3mm}
	
	\noindent{\bf Acknowledgement}
	
	\vspace{2mm}
	
	This work is supported by the National Natural Science Foundation of China, the Natural Sciences and Engineering Research Council of Canada (RGPIN-2018-05687 and RGPIN 2024-05941), and a centennial fund of the University of Alberta.

	
	\bibliographystyle{abbrv}

	\bibliography{ref20230701}
	%
	%
	%
\end{document}